\documentclass[a4paper,10pt]{amsart}
\usepackage{amssymb,amsfonts,amsmath}
\usepackage{latexsym}
\usepackage{amssymb}
\usepackage[all]{xy}
\addtolength{\hoffset}{-2cm} \addtolength{\textwidth}{4cm}
\newtheorem{theorem}{Theorem}[section]
\newtheorem{lemma}[theorem]{Lemma}
\newtheorem{proposition}[theorem]{Proposition}
\newtheorem{corollary}[theorem]{Corollary}
\theoremstyle{definition}
\newtheorem{definition}[theorem]{Definition}

\theoremstyle{remark}
\newtheorem{remark}[theorem]{Remark}

\newcommand{\Mod}{\text{\rm mod-}}

\newcommand{\Proj}{\operatorname{Proj}}
\newcommand{\Hom}{\operatorname{Hom}}

\newcommand{\Ext}{\operatorname{Ext}}
\newcommand{\End}{\operatorname{End}}
\newcommand{\Ker}{\operatorname{Ker}}

\newcommand{\Ima}{\operatorname{Im}}

\sloppy
\title{Combinatorial aspects of extensions of Kronecker modules}

\author{Csaba Sz\'ant\'o}

\begin{document}
\maketitle \pagestyle{myheadings} \markboth{\sc Csaba
Sz\'ant\'o}{\sc Combinatorial aspects of extensions of Kronecker modules}

{\bf Abstract.} Let $kK$ be the path algebra of the Kronecker quiver and consider the category $\Mod
kK$ of finite dimensional right modules over $kK$ (called Kronecker modules). We prove that extensions of Kronecker modules are field independent up to Segre classes, so they can be described purely combinatorially. We use in the proof explicit descriptions of particular extensions and a variant of the well known Green formula for Ringel-Hall numbers, valid over arbitrary fields.  We end the paper with some results on extensions of preinjective Kronecker modules, involving the dominance ordering from partition combinatorics and its various generalizations.

{\bf Key words.} Kronecker modules, Green formula, Segre classes,
extension monoid product, dominance ordering.

\medskip

{\bf 2000 Mathematics Subject Classification.} 16G20.

\section{\bf Introduction}
Let $K$ be the Kronecker quiver and $k$ a field. We will consider the path algebra $kK$ of $K$ over $k$
(called Kronecker algebra) and the category $\Mod kK$ of finite
dimensional right modules over $kK$ (called Kronecker modules). The isomorphism class of the module $M$ will be denoted by $[M]$.

For $d\in\mathbb N^2$ let $M_d$ be the set of isomorphism classes of
Kronecker modules of dimension $d$. Following Reineke in
\cite{reineke} for subsets $\mathcal{A}\subset M_d$,
$\mathcal{B}\subset M_e$ we define
$$\mathcal{A}*\mathcal{B}=\{[X]\in M_{d+e}|\text{there exists an
exact sequence } 0\to N\to X\to M\to 0\text{ for some }
[M]\in\mathcal{A},[N]\in\mathcal{B}\}.$$ So the product
$\mathcal{A}*\mathcal{B}$ is the set of isoclasses of all extensions
of modules $M$ with $[M]\in\mathcal{A}$ by modules $N$ with
$[N]\in\mathcal{B}$. This is in fact Reineke's extension monoid
product using isomorphism classes of modules instead of modules. It
is important to know (see \cite{reineke}) that the product above is
associative, i.e. for $\mathcal{A}\subset M_d$, $\mathcal{B}\subset
M_e$, $\mathcal{C}\subset M_f$, we have
$(\mathcal{A}*\mathcal{B})*\mathcal{C}=\mathcal{A}*(\mathcal{B}*\mathcal{C})$. We also have $\{[0]\}*\mathcal{A}=\mathcal{A}*\{[0]\}=\mathcal{A}$ and the product $*$ is distributive over the union of sets.

Recall that in case when $k$ is finite, the rational Ringel-Hall algebra $\mathcal{H}(\Lambda,\mathbb Q)$
associated to the algebra $kK$, is the free $\mathbb Q$-module
having as basis the isomorphism classes of Kronecker modules together
with a multiplication defined by $[N_1][N_2]=\sum_{[M]}F^M_{N_1N_2}[M]$, where the structure constants $F^M_{N_1N_2}=|\{M\supseteq U|\ U\cong N_2,\
M/U\cong N_1\}|$ are called Ringel-Hall numbers. Note that in this case the extension monoid product is $\{[N_1]\}*\{[N_2]\}=\{[M]|F^M_{N_1N_2}\neq 0\}$.

In \cite{szantoszoll} we have proved that extensions of preinjective (preprojective) Kronecker modules are field independent, so the extension monoid product of two preinjectives (preprojectives) can be described combinatorially. In this paper we generalize this result by showing that extensions of arbitrary Kronecker modules are field independent up to Segre classes, so all extension monoid products can be described combinatorially. In order to prove this result we formulate a variant of the well known Green formula for Ringel-Hall numbers, valid over an arbitrary (not only finite) field and we describe explicit formulas for some particular extension monoid products.

The last section surveys some combinatorial properties related with the embedding and extension of preinjective modules. We show that particular orderings known from partition combinatorics and their generalizations (such as dominance, weighted dominance, generalized majorization) play an important role in this context.

\section{\bf Green's formula rewritten}
Consider an acyclic quiver $Q$ and the path algebra $kQ$, where $k$ is an arbitrary field. Denote by $\Mod kQ$ the category of finite dimensional right modules over $kQ$. The aim of this section is to formulate a weaker version of Green's formula valid over an arbitrary field $k$ (not only over finite ones). The proof can be derived from \cite{rin2} or it follows directly from Lemma 2.9 and Lemma 2.11 in \cite{hub1} (using the fact that our algebra is hereditary).
\begin{theorem}{\rm(Green)} \label{GreenTheorem} For $M,N,X,Y\in\Mod kQ$

$\exists E\in\Mod kQ$ such that the cross $$\xymatrix{ &
Y\ar[d]^{\eta} & \\ N\ar[r]^{\nu} & E\ar[r]^{\mu}\ar[d]^{\xi} & M\\ & X & }$$
is exact (i.e. both the row and the column are short exact sequences) iff
$\exists A,B,C,D\in\Mod kQ$ such that the frame
$$\xymatrix{D\ar[r]^{\delta_Y}\ar[d]^{\delta_N} & Y\ar[r]^{\beta_Y} & B\ar[d]^{\beta_M}\\ N\ar[d]^{\gamma_N} & &
M\ar[d]^{\alpha_M}\\ C\ar[r]^{\gamma_X}& X\ar[r]^{\alpha_X} & A}$$
is exact (i.e. the edges of the frame are short exact sequences).

Moreover, in this case we also have the following $3\times 3$ commutative diagram with rows and columns short exact sequences and with the top left square a pull-back and the bottom right square a push-out. $$\xymatrix{D\ar[r]^{\delta_Y}\ar[d]^{\delta_N} & Y\ar[r]^{\beta_Y}\ar[d]^{\eta} & B\ar[d]^{\beta_M}\\ N\ar[r]^{\nu}\ar[d]^{\gamma_N} & E\ar[r]^{\mu}\ar[d]^{\xi} & M\ar[d]^{\alpha_M}\\ C\ar[r]^{\gamma_X}& X\ar[r]^{\alpha_X} & A}$$
\end{theorem}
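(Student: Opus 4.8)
The plan is to treat the two implications separately, building the required objects by the usual subobject/quotient and pushout/pullback constructions, and to invoke hereditarity of $kQ$ at the single point where it is genuinely needed. Since every claim is an existence statement about modules and homomorphisms, with no cardinalities counted, the whole argument is visibly field independent; this is exactly what makes the statement survive the passage from finite to arbitrary $k$.

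For the implication from the cross to the frame I would identify $N$ and $Y$ with their images $\nu(N)$ and $\eta(Y)$ inside $E$. Set $D=\nu(N)\cap\eta(Y)$, let $B=\mu(\eta(Y))\cong Y/D$ be the image of $Y$ in $M=E/\nu(N)$, let $C=\xi(\nu(N))\cong N/D$ be the image of $N$ in $X=E/\eta(Y)$, and put $A=E/(\nu(N)+\eta(Y))$. The inclusions $D\hookrightarrow Y$ and $D\hookrightarrow N$, the quotient maps onto $B$ and $C$, and the inclusions $B\hookrightarrow M$, $C\hookrightarrow X$ with common cokernel $A$ then exhibit the four edges of the frame as short exact sequences. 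This direction is a direct diagram chase.

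The substantive direction is from the frame to the cross. First I would form the pushout $P$ of $Y\xleftarrow{\delta_Y}D\xrightarrow{\delta_N}N$, which yields short exact sequences $0\to Y\to P\to C\to 0$ and $0\to N\to P\to B\to 0$, so that $P$ will play the role of $\nu(N)+\eta(Y)$; combining the two quotient maps gives the key short exact sequence $0\to D\to P\xrightarrow{(q_B,q_C)}B\oplus C\to 0$. It then remains to realize $E$ as an extension $0\to P\to E\to A\to 0$ whose pushout along $q_B$ recovers the given $0\to B\to M\to A\to 0$ and whose pushout along $q_C$ recovers $0\to C\to X\to A\to 0$, which is precisely what produces $E/\nu(N)\cong M$ and $E/\eta(Y)\cong X$. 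Applying $\Ext^{\bullet}(A,-)$ to the key sequence gives the exact row $\Ext^1(A,P)\to\Ext^1(A,B)\oplus\Ext^1(A,C)\to\Ext^2(A,D)$, and here I would use that $kQ$ is hereditary, so $\Ext^2(A,D)=0$ and the first map is surjective. Choosing a preimage of the pair $([M],[X])$ furnishes the desired $E$.

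Finally I would assemble the $3\times 3$ diagram from these pieces and read off its properties: the inclusions $\nu,\eta$ make the top left square a pull-back, since it computes $\nu(N)\cap\eta(Y)=D$ inside $E$, while the quotients $\mu,\xi$ make the bottom right square a push-out, since it computes $E/(\nu(N)+\eta(Y))=A$. I expect the main obstacle to be the middle step of the second implication, namely arranging a single class in $\Ext^1(A,P)$ whose two pushouts simultaneously match the prescribed extensions $M$ and $X$; hereditarity is exactly what removes the obstruction living in $\Ext^2(A,D)$.
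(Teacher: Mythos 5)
Your proof is correct and follows essentially the same route as the paper, which gives no argument of its own but defers to Ringel and to Lemmas 2.9 and 2.11 of Hubery's notes: their content is exactly your two steps, namely the easy direction via $D=\nu(N)\cap\eta(Y)$ with the induced images and cokernels, and the hard direction via the pushout $P$ of $Y\leftarrow D\rightarrow N$ together with lifting the pair $([M],[X])$ along $\Ext^1(A,P)\to\Ext^1(A,B)\oplus\Ext^1(A,C)$, which is surjective because $\Ext^2(A,D)=0$ by hereditarity. In effect you have supplied, correctly, the details the paper outsources to the literature.
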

There is an important corollary of Theorem \ref{GreenTheorem}:
\begin{corollary} \label{Greencor} For $M,N,X,Y\in\Mod kQ$ with
$\Ext^1(M,N)=0$ there is an exact sequence $0\to Y\to M\oplus N\to X\to 0$ iff $\exists A,B,C,D\in\Mod kQ$
such that the frame below is exact. $$\xymatrix{D\ar[r]\ar[d] &
Y\ar[r] & B\ar[d]\\ N\ar[d] & & M\ar[d]\\ C\ar[r]&
X\ar[r] & A}$$
\end{corollary}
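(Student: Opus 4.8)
The plan is to apply Green's theorem (Theorem~\ref{GreenTheorem}) with the fixed choice $E=M\oplus N$, using the hypothesis $\Ext^1(M,N)=0$ to identify the middle term of any extension of $M$ by $N$ with the direct sum. The frame in the corollary is exactly the frame of Theorem~\ref{GreenTheorem}, whose edges are the short exact sequences $0\to D\to Y\to B\to 0$, $0\to C\to X\to A\to 0$, $0\to D\to N\to C\to 0$ and $0\to B\to M\to A\to 0$, and the associated cross has row $0\to N\to E\to M\to 0$ and column $0\to Y\to E\to X\to 0$. So the task reduces to matching the existence of the exact sequence $0\to Y\to M\oplus N\to X\to 0$ with the existence of such a cross, and then invoking Theorem~\ref{GreenTheorem} to pass between the cross and the frame.

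For the forward implication, suppose $0\to Y\to M\oplus N\to X\to 0$ is exact. I would take $E=M\oplus N$, let the column be this given sequence, and let the row be the canonical split sequence $0\to N\to M\oplus N\to M\to 0$ furnished by the inclusion and the projection. These two sequences share the middle term $M\oplus N$ and therefore assemble into an exact cross in the sense of Theorem~\ref{GreenTheorem}; the existence-of-frame direction of that theorem then produces $A,B,C,D$ with the frame exact. Note that this direction does not require $\Ext^1(M,N)=0$, since the split sequence is available unconditionally.

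For the converse, suppose the frame is exact. The other direction of Theorem~\ref{GreenTheorem} produces a module $E$ together with an exact cross, in particular a row $0\to N\to E\to M\to 0$ and a column $0\to Y\to E\to X\to 0$. This is where the hypothesis enters: since $\Ext^1(M,N)=0$, the row splits, so $E\cong M\oplus N$, and transporting the column along this isomorphism yields the desired sequence $0\to Y\to M\oplus N\to X\to 0$.

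The only genuinely delicate point is the converse, where one must argue that the module $E$ delivered by Green's theorem is forced to be $M\oplus N$ rather than some other extension of $M$ by $N$; this is precisely what $\Ext^1(M,N)=0$ controls. Everything else is a direct specialization of Theorem~\ref{GreenTheorem}, so I expect no real obstacle beyond bookkeeping the roles of the four corner modules and checking that the split sequence is admissible as a row of the cross.
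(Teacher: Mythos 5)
Your proof is correct and is exactly the intended argument: the paper states this as an immediate corollary of Theorem~\ref{GreenTheorem}, and your specialization (split row $0\to N\to M\oplus N\to M\to 0$ for the forward direction, and $\Ext^1(M,N)=0$ forcing the row of the cross to split so that $E\cong M\oplus N$ for the converse) is precisely the bookkeeping the paper leaves implicit. No gaps.
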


\section{\bf Facts on Kronecker modules}

The indecomposables in $\Mod kK$ are divided into three families:
the preprojectives, the regulars and the preinjectives (see
\cite{assem},\cite{aus},\cite{rin1}).

The preprojective (respectively preinjective) indecomposable modules (up to isomorphism)
will be denoted by $P_n$ (respectively $I_n$), where $n\in\mathbb N$. The dimension vector of $P_n$ is $(n+1,n)$ and that of $I_n$ is $(n,n+1)$.

A module is preprojective (preinjective) if it is the direct sum of preprojective (preinjective) indecomposables.
For a partition $a=(a_1,\dots,a_n)$ we will use the notation $P_a:=P_{a_n}\oplus\dots\oplus P_{a_1}$ and $I_a:=I_{a_1}\oplus\dots\oplus I_{a_n}$.

The indecomposables which are neither preinjective nor preprojective are called regular. A module is regular if it is the direct sum of regular indecomposables.
The category of regular modules is an abelian, exact subcategory which decomposes into a direct sum of serial categories with Auslander-Reiten quiver of the form $ZA_{\infty}/1$, called homogeneous tube. These tubes are indexed by the closed points $x$ in the scheme $\mathbb{P}^1_k=\Proj k[X,Y]$. We denote by $\mathbb{H}_k$ the set of these points. A regular indecomposable of regular length $t$ lying on the tube $\mathcal{T}_x$ will be denoted by $R_x(t)$. Note that its unique regular composition series is $R_x(1)\subset\dots\subset R_x(t-1)\subset R_x(t)$. Also note that for the regular simple $R_x(1)$ its endomorphism ring $\End(R_x(1))$ is the residue field at the point $x$. The degree of this field over $k$ is called the degree of the point $x$ and denoted by $\deg x$. It follows that $\underline\dim R_x(t)=(t\deg x,t\deg x)$. In the case when $k$ is algebraically closed, the closed points of the scheme above all have degree 1 and can be identified with the points of the classical projective line over $k$.

For a partition $\lambda=(\lambda_1,\dots,\lambda_n)$ we define $R_x(\lambda)=R_x(\lambda_1)\oplus\dots\oplus R_x(\lambda_n)$ and denote by
$P$ (respectively $I$, $R$) a preprojective (respectively preinjective, regular) module.
We also define the set
$$\mathcal{R}_n=\{[R_{x_1}(a_1)\oplus...\oplus R_{x_r}(a_r)]\,|\,r,a_1,\dots,a_r\in\mathbb N^*,
x_1,...,x_r\in \mathbb{H}_k \text{ are pairwise different and }$$$$
a_1\deg {x_1}+...+a_r\deg {x_r}=n\}.$$
We will describe now (in our special context) the so-called decomposition symbol used by Hubery in \cite{hub2}.
A decomposition symbol $\alpha=(\mu,\sigma)$ consists of a pair of partitions denoted by $\mu$ (specifying a module without
homogeneous regular summands) and a multiset $\sigma=\{(\lambda^1, d_1),\dots,(\lambda^r, d_r)\}$, where $\lambda^i$ are partitions and $d_i\in\mathbb N^*$. The multiset $\sigma$ will be called a Segre symbol. Given a decomposition symbol $\alpha=(\mu,\sigma)$ (where $\mu=(c=(c_1,...,c_t),d=(d_1,...,d_s))$) and a field $k$, we define the decomposition class $S(\alpha,k)$ to be the set of isomorphism classes of modules of the form $M(\mu,k)\oplus R$, where $M(\mu,k)=P_{c_t}\oplus\dots\oplus P_{c_1}\oplus I_{d_1}\oplus\dots\oplus I_{d_s} $ is the $kK$-module (up to isomorphism uniquely) determined by $\mu$ and $R=R_{x_1}(\lambda^1)\oplus\dots\oplus R_{x_r}(\lambda^r)$
for some distinct points $x_1,\dots x_r\in\mathbb H_k$ such that $\deg x_i = d_i$. For a Segre symbol $\sigma$ let $S(\sigma,k):=S((\emptyset,\sigma),k)$. Trivially $S(\alpha,k)\cap S(\beta,k)=\emptyset$ for decomposition symbols $\alpha\neq\beta$. Also note that $\mathcal{R}_n=\cup S(\sigma,k)$, where the union is taken over all Segre symbols $\sigma=\{((a_1), d_1),\dots,((a_r), d_r)\}$ with $r\in\mathbb N^*$ and $a_1 d_1+...+a_r d_r=n$.

\begin{remark}\label{NrPointsHk}For $k$ finite with $q$ elements the number of points $x\in\mathbb H_k$ of degree 1 is $q+1$. The number of points $x\in\mathbb H_k$ of degree $l\geq 2$ is $N(q,l)=\frac1{d}\sum_{d|l}\mu(\frac{l}d)q^d$, where $\mu$ is the M\"obius function and $N(q,l)$ is the
number of monic, irreducible polynomials of degree $l$ over a field with $q$ elements. We can conclude that for a decomposition symbol $\alpha$ the polynomial  $n_{\alpha}(q)=|S(\alpha,k)|$ is strictly increasing in $q>1$ (see \cite{hub2}).
\end{remark}

The following well-known lemma summarizes some facts on Kronecker modules:
\begin{lemma}\label{KroneckerFactsLemma}

{\rm a)} Let $P$ be preprojective, $I$ preinjective and $R$ regular module. Then $\Hom ({R},{P})=\Hom ({I},{P})=\Hom ({I},{R})=
\Ext^1({P},{R})=\Ext^1({P},{I})= \Ext^1({R},{I})=0.$

{\rm b)} If $x\neq x'$, then
$\Hom(R_x(t),R_{x'}(t'))=\Ext^1(R_x(t),R_{x'}(t'))=0$ (i.e. the tubes are pairwise orthogonal).

{\rm c)} For $n\leq m$, we have $\dim _k\Hom (P_n,P_m)=m-n+1$ and
$\Ext^1(P_n,P_m)=0$; otherwise $\Hom (P_n,P_m)=0$ and $\dim
_k\Ext^1(P_n,P_m)=n-m-1$. In particular $\End(P_n)\cong k$ and
$\Ext^1(P_n,P_n)=0$.

{\rm d)} For $n\geq m$, we have $\dim _k\Hom (I_n,I_m)=n-m+1$ and
$\Ext^1(I_n,I_m)=0$; otherwise $\Hom (I_n,I_m)=0$ and $\dim
_k\Ext^1(I_n,I_m)=m-n-1$. In particular $\End(I_n)\cong k$ and
$\Ext^1(I_n,I_n)=0$.

{\rm e)} $\dim _k\Hom (P_n,I_m)=n+m$ and $\dim
_k\Ext^1(I_m,P_n)=m+n+2$.

{\rm f)} $\dim _k\Hom (P_n,R_x(t))=\dim _k\Hom (R_x(t),I_n)=d_pt$
and $\dim _k\Ext^1(R_x(t),P_n)=\dim _k\Ext^1(I_n,R_x(t))=t\deg x$.

{\rm g)} $\dim _k\Hom (R_x(t_1),R_x(t_2))=\dim
_k\Ext^1(R_x(t_1),R_x(t_2))=\min{(t_1,t_2)}\deg x$.

{\rm h)} For $P'$ a preprojective module every nonzero morphism $f:P_n\to P'$ is a monomorphism. If $R$ is regular
then for every nonzero morphism $f:P_n\to R$, $f$ is either a
monomorphism or $\Ima f$ is regular. In particular if $R$ is
regular simple and $\Ima f$ is regular then $f$ is an epimorphism.

{\rm i)} For $I'$ a preinjective module every nonzero morphism $f:I'\to I_n$ is an epimorphism. If $R$ is regular then for every nonzero morphism $f:R\to I_n$, $f$ is either an epimorphism or $\Ima f$ is regular. In particular if $R$ is
regular simple and $\Ima f$ is regular then $f$ is a monomorphism.
\end{lemma}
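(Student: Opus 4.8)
The two standard tools are the Euler--Ringel form and Auslander--Reiten duality. Since $kK$ is hereditary, for all $M,N$ one has $\dim_k\Hom(M,N)-\dim_k\Ext^1(M,N)=\langle\underline\dim M,\underline\dim N\rangle$, where on dimension vectors $\langle(d_1,d_2),(e_1,e_2)\rangle=d_1e_1+d_2e_2-2d_2e_1$. I would first record the dimension vectors $\underline\dim P_n=(n+1,n)$, $\underline\dim I_n=(n,n+1)$, $\underline\dim R_x(t)=(t\deg x,t\deg x)$, the additive defect $\partial(d_1,d_2)=d_2-d_1$ (so $\partial P=-1$, $\partial R=0$, $\partial I=+1$ on indecomposables), and the AR formula $\Ext^1(M,N)\cong D\overline{\Hom}(N,\tau M)$ together with the $\tau$-action $\tau P_n=P_{n-2}$, $\tau I_n=I_{n+2}$, $\tau R_x(t)\cong R_x(t)$ (every tube being homogeneous). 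The strategy is that the Euler form supplies the difference $\Hom-\Ext$, and the vanishing statements decide which summand survives.

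For a) the three $\Hom$-vanishings $\Hom(R,P)=\Hom(I,P)=\Hom(I,R)=0$ are the ``no backward maps'' part of the separating property of the tubular family of a tame hereditary algebra, for which I would cite \cite{assem}, \cite{aus}, \cite{rin1}. The three $\Ext$-vanishings then follow formally from AR duality: $\Ext^1(P,R)\cong D\overline{\Hom}(R,\tau P)=0$ because $\tau P$ is again preprojective and $\Hom(R,\text{preprojective})=0$, and likewise $\Ext^1(P,I)\cong D\overline{\Hom}(I,\tau P)=0$ and $\Ext^1(R,I)\cong D\overline{\Hom}(I,\tau R)=0$. Part b) is the pairwise orthogonality of distinct homogeneous tubes, a standard feature of the tubular family (the regular simples of different tubes are non-isomorphic simple objects of the regular category).

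Parts c)--g) I would get by substituting the above dimension vectors into $\langle-,-\rangle$ and killing one term using a). For instance $\langle\underline\dim P_n,\underline\dim I_m\rangle=n+m$ with $\Ext^1(P_n,I_m)=0$ gives $\dim_k\Hom(P_n,I_m)=n+m$, while $\Hom(I_m,P_n)=0$ turns $\langle\underline\dim I_m,\underline\dim P_n\rangle=-(m+n+2)$ into the asserted $\Ext^1$-dimension; the regular cases f) are identical. In c) (and dually d)) the Euler form only yields the difference, so I would compute $\dim_k\Hom(P_n,P_m)=\max(m-n+1,0)$ directly from the string representation of $P_n$ and read off $\Ext^1$ from the Euler form; d) then follows from c) by the $k$-duality $D$ interchanging $P_n\leftrightarrow I_n$. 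For g) the vanishing $\langle\underline\dim R_x(t_1),\underline\dim R_x(t_2)\rangle=0$ gives $\Hom\cong\Ext^1$ dimensionally, and the common value $\min(t_1,t_2)\deg x$ comes from the uniserial structure of the tube together with $\End(R_x(1))$ having degree $\deg x$ over $k$.

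The main work, and the step I expect to be the real obstacle, is h) (with i) its dual), since these concern the actual image of a morphism rather than mere dimensions. The key auxiliary fact is that a submodule of a preprojective module is again preprojective: any indecomposable summand of such a submodule is preprojective, regular or preinjective, and a regular or preinjective summand would yield a nonzero map into a preprojective, contradicting a). Granting this, for $f\colon P_n\to P'$ nonzero both $\Ker f$ and $\Ima f$ are preprojective, so $\partial\Ker f+\partial\Ima f=-1$ with $\partial\Ima f\le-1$ forces $\partial\Ker f=0$, i.e. $\Ker f=0$ and $f$ is mono. For $f\colon P_n\to R$ nonzero, $\Ima f\subseteq R$ has no preinjective summand (again by $\Hom(I,R)=0$), hence $\Ima f=P''\oplus R''$ with $P''$ preprojective; if $f$ is not mono then $\partial\Ker f\le-1$ forces $\partial\Ima f\ge0$, whence $P''=0$ and $\Ima f$ is regular. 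When $R=R_x(1)$ is regular simple, a nonzero regular image is a nonzero subobject of a simple object of the regular category, so equals $R$ and $f$ is epi. Part i) is obtained by applying the duality $D$, which exchanges preprojectives and preinjectives, preserves the regular class, and interchanges monomorphisms with epimorphisms.
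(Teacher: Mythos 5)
Your sketch is correct: the Euler form you use, $\langle(d_1,d_2),(e_1,e_2)\rangle=d_1e_1+d_2e_2-2d_2e_1$, is the one consistent with the paper's conventions ($\underline\dim P_n=(n+1,n)$, $\underline\dim I_n=(n,n+1)$), the AR-duality vanishings and the defect bookkeeping in h) and i) all check out, and your value $t\deg x$ in f) silently corrects the paper's typo ``$d_pt$''. Note that the paper gives no proof of this lemma at all --- it is stated as well-known, with the standard references (Assem--Simson--Skowronski, Auslander--Reiten--Smalo, Ringel) cited at the start of the section --- so there is no paper argument to compare against; yours is precisely the standard argument those sources contain, filled in with slightly more detail than the paper chose to provide.
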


The defect of $M\in \Mod kK$ with dimension vector $(a,b)$ is
defined in the Kronecker case as $\partial M:=b-a$. Observe that if
$M$ is a preprojective (preinjective, respectively regular)
indecomposable, then $\partial M=-1$ ($\partial M=1$, respectively
$\partial M=0$). Moreover, for a short exact sequence $0\to M_1\to
M_2\to M_3\to 0$ in $\Mod kK$ we have $\partial M_2=\partial
M_1+\partial M_3$.

An immediate consequence of the facts above is the following:
\begin{corollary} \label{ProdPRI} For the partitions $c=(c_1,...,c_t)$ and $d=(d_1,...,d_s)$ we have that:  $$\{[P_c\oplus R_{x_1}(\lambda^1)\oplus\dots\oplus R_{x_r}(\lambda^r)\oplus I_d]\}=$$ $$\{[P_{c_t}]\}*\dots *\{[P_{c_1}]\}*\{[R_{x_1}(\lambda^1)]\}*\dots *\{[R_{x_r}(\lambda^r)]\}*\{[I_{d_1}]\}*\dots *\{[I_{d_s}]\}.$$
\end{corollary}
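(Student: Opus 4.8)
The plan is to reduce the whole statement to one elementary observation and then iterate it using associativity. The observation is this: whenever $\Ext^1(U,V)=0$, the singleton product $\{[U]\}*\{[V]\}$ consists of those $[X]$ admitting a short exact sequence $0\to V\to X\to U\to 0$, and since $\Ext^1(U,V)=0$ forces every such sequence to split, we get $\{[U]\}*\{[V]\}=\{[U\oplus V]\}$. Using the associativity of $*$ recorded in the introduction, I would evaluate the right-hand side by bracketing it from the right, that is as $\{[P_{c_t}]\}*(\dots*(\{[R_{x_r}(\lambda^r)]\}*(\{[I_{d_1}]\}*(\dots*\{[I_{d_s}]\}))))$, and prove by induction on the number of factors that every partial product, starting from the innermost one, is again a singleton $\{[Y]\}$ equal to the direct sum of the indecomposables accumulated so far.

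The induction step amounts to checking, at each stage, that $\Ext^1(F,Y)=0$, where $F$ is the indecomposable being multiplied in from the left and $Y$ is the already assembled direct sum on the right; granted this, the observation above upgrades $\{[F]\}*\{[Y]\}$ to $\{[F\oplus Y]\}$. All the required vanishings are supplied by Lemma \ref{KroneckerFactsLemma}. Working from right to left: first the preinjective block assembles to $I_d$ because $\Ext^1(I_{d_j},I_{d_l})=0$ for $l>j$, since $d$ is a partition and hence $d_j\ge d_l$ (part d); next each regular summand $R_{x_i}(\lambda^i)$ is adjoined, using $\Ext^1(R,I)=0$ (part a) against the preinjective part already present and $\Ext^1(R_{x_i}(\lambda^i),R_{x_j}(\lambda^j))=0$ for $j>i$ by orthogonality of distinct tubes (part b); finally each preprojective $P_{c_j}$ is adjoined, using $\Ext^1(P,R)=\Ext^1(P,I)=0$ (part a) together with $\Ext^1(P_{c_j},P_{c_l})=0$ for $l<j$, which holds because $c_l\ge c_j$ (part c). At the end the accumulated module is exactly $P_{c_t}\oplus\dots\oplus P_{c_1}\oplus R_{x_1}(\lambda^1)\oplus\dots\oplus R_{x_r}(\lambda^r)\oplus I_d=P_c\oplus R_{x_1}(\lambda^1)\oplus\dots\oplus R_{x_r}(\lambda^r)\oplus I_d$, which is the claimed singleton.

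I do not expect a genuine obstacle beyond verifying these vanishings stage by stage and confirming that each partial product stays a singleton, so that associativity may legitimately be applied at the next stage; the only points that must be handled with care are organizational. First, one must respect the asymmetry of $*$: in $\{[U]\}*\{[V]\}$ the right factor $V$ is the submodule and the left factor $U$ the quotient, so the governing group is $\Ext^1(U,V)$ and not $\Ext^1(V,U)$, and reversing the order would in general produce genuinely non-split extensions. Second, one must use that $c$ and $d$ are partitions, so that the indices decrease in the direction in which the factors are assembled, which is precisely what makes the preprojective--preprojective and preinjective--preinjective Ext groups vanish (parts c and d); the reversed inequalities would give nonzero Ext and hence extra isoclasses.
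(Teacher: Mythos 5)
Your proof is correct and is essentially the paper's own argument: the paper offers no written proof, stating the corollary as an immediate consequence of the Ext-vanishing facts in Lemma \ref{KroneckerFactsLemma}, which is precisely what you spell out (each partial product $\{[U]\}*\{[V]\}$ collapses to $\{[U\oplus V]\}$ because $\Ext^1(U,V)=0$, iterated via associativity). You have merely made explicit the right-to-left induction, the orientation convention for $*$, and the role of $c$, $d$ being partitions, all of which the paper leaves implicit.
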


As stated in the beginning, we focus our attention on extensions of Kronecker modules, or equivalently on the products of
the form $\{[M]\}*\{[N]\}$. Using the corollary above we can see
that this iteratively reduces to the knowledge of the
following particular products:
$$\{[I_i]\}*\{[I_j]\},\ \{[P_i]\}*\{[P_j]\},\ \{[I_n]\}*\{[R_x(\lambda)]\},\ \{[R_x(\lambda)]\}*\{[P_n]\},\ \{[I_{n}]\}*\{[P_m]\},\ \{[R_x(\lambda)]\}*\{[R_x(\mu)]\}.$$

\section{\bf Particular extension monoid products and field independence in the general case}
In this section we will work in the category $\Mod kK$ with $k$ an arbitrary field.
We will analyze the field independence of extensions of arbitrary Kronecker modules. For this purpose we will describe the particular extension monoid products listed at the end of the previous section.

We start with the description of $\{[I_i]\}*\{[I_j]\}$ and $\{[P_i]\}*\{[P_j]\}.$
\begin{proposition} \label{ProdIiIj} We have:

$\{[I_i]\}*\{[I_j]\}=\left\{\begin{array}{cc}\{[I_i\oplus I_j]\}
&\text{ for } i-j\geq-1
\\ \{[I_j\oplus I_i],[I_{j-1}\oplus
I_{i+1}],...,[I_{j-[\frac{j-i}2]}\oplus I_{i+[\frac{j-i}2]}]\}
&\text{ for } i-j<-1\end{array}\right..$

Dually we have:

$\{[P_i]\}*\{[P_j]\}=\left\{\begin{array}{cc}\{[P_i\oplus P_j]\}
&\text{ for } i-j\leq-1
\\ \{[P_j\oplus P_i],[P_{j+1}\oplus
P_{i-1}],...,[P_{j+[\frac{i-j}2]}\oplus P_{i-[\frac{i-j}2]}]\}
&\text{ for } i-j>-1\end{array}\right..$
\end{proposition}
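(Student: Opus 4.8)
The plan is to pin down the possible middle terms structurally, then realise each of them by an explicit short exact sequence, and finally transport the result to the preprojective case by duality.

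By Lemma~\ref{KroneckerFactsLemma}(d) we have $\dim_k\Ext^1(I_i,I_j)=j-i-1$ when $i<j$ and $\Ext^1(I_i,I_j)=0$ otherwise. Hence when $i-j\ge-1$ (i.e.\ $j\le i+1$) every extension of $I_i$ by $I_j$ splits and the only middle term is $I_i\oplus I_j$, which settles the first branch. So assume $i-j<-1$, i.e.\ $j\ge i+2$, and let $0\to I_j\to X\to I_i\to 0$ be any extension. I first claim $X$ is preinjective. Writing $X=P'\oplus R'\oplus I'$ with $P'$ preprojective, $R'$ regular and $I'$ preinjective, the composites of the inclusion $I_j\hookrightarrow X$ with the projections onto $P'$ and $R'$ vanish by Lemma~\ref{KroneckerFactsLemma}(a) (since $\Hom(I_j,P')=\Hom(I_j,R')=0$), so $I_j\subseteq I'$ and $X/I_j\cong P'\oplus R'\oplus(I'/I_j)$. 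As $X/I_j\cong I_i$ is indecomposable preinjective this forces $P'=R'=0$, so $X$ is preinjective. Since $\partial X=\partial I_i+\partial I_j=2$ and each preinjective indecomposable has defect $1$, $X$ has exactly two indecomposable summands, i.e.\ $X\cong I_a\oplus I_b$ with $a+b=i+j$ and, say, $a\le b$.

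Next I bound the indices. For any indecomposable summand $I_c$ of $X$, compose the inclusion $I_c\hookrightarrow X$ with the epimorphism $\pi\colon X\to I_i$. If $c<i$ then $\Hom(I_c,I_i)=0$ by Lemma~\ref{KroneckerFactsLemma}(d), so $I_c\subseteq\Ker\pi\cong I_j$; but a nonzero map $I_c\to I_j$ is an epimorphism by Lemma~\ref{KroneckerFactsLemma}(i), hence an isomorphism, giving $c=j$, contradicting $c<i<j$. Thus $a\ge i$, and together with $a\le b$ and $a+b=i+j$ this yields $i\le a\le b\le j$. Writing $a=i+l$ gives exactly $0\le l\le[\frac{j-i}2]$, so every middle term lies in the asserted list.

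It remains to realise each $I_{i+l}\oplus I_{j-l}$, $0\le l\le[\frac{j-i}2]$. For $l=0$ the split sequence works. For $l\ge1$ I use the standard matrix model of the preinjectives (the two structure maps acting as truncation and shift) to build $\phi=(\phi_1,\phi_2)\colon I_j\to I_{i+l}\oplus I_{j-l}$, where $\phi_1\colon I_j\to I_{i+l}$ is the truncation and $\phi_2\colon I_j\to I_{j-l}$ the maximally shifted surjection. Their kernels are regular submodules of $I_j$ of dimension vectors $(j-i-l,j-i-l)$ and $(l,l)$, supported respectively on the highest and the lowest coordinate basis vectors; since $i\ge0$ these index ranges are disjoint, so $\Ker\phi_1\cap\Ker\phi_2=0$ and $\phi$ is injective. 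The cokernel of $\phi$ is a quotient of the preinjective $I_{i+l}\oplus I_{j-l}$, hence again preinjective (any map from it to a preprojective or regular module vanishes by Lemma~\ref{KroneckerFactsLemma}(a)); it has defect $1$ and dimension vector $(i,i+1)$, so it is isomorphic to $I_i$. This produces the required sequence $0\to I_j\to I_{i+l}\oplus I_{j-l}\to I_i\to0$. The main obstacle is precisely the injectivity of $\phi$: one must choose the two surjections so that the regular kernels meet trivially, which is where the explicit coordinate description of the preinjectives is needed, everything else being forced by defect and dimension. Finally, the preprojective statement follows by applying the duality $D=\Hom_k(-,k)\colon\Mod kK\to\Mod kK$ (using $kK^{op}\cong kK$), which reverses short exact sequences and interchanges $P_n$ with $I_n$, thereby turning the product $\{[I_i]\}*\{[I_j]\}$ into $\{[P_i]\}*\{[P_j]\}$ with the inequalities reversed.
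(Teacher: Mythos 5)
Your proof is correct, but it takes a genuinely different route from the paper's. The paper disposes of the proposition in two sentences by citation: over a finite field the formulas follow from the Ringel--Hall product computations in \cite{szanto1}, and the field independence of the possible middle terms of preinjective (preprojective) short exact sequences, proved in \cite{szantoszoll}, then carries them to arbitrary $k$. Your argument is instead self-contained and uniform over every field: you constrain the middle term $X$ of $0\to I_j\to X\to I_i\to 0$ using only Lemma~\ref{KroneckerFactsLemma} (Hom/Ext vanishing, the fact that nonzero maps between preinjectives are epimorphisms) together with defect arithmetic, concluding $X\cong I_{i+l}\oplus I_{j-l}$ with $0\le l\le[\frac{j-i}2]$; you then realise each such middle term by an explicit pair of surjections $I_j\to I_{i+l}$, $I_j\to I_{j-l}$ (truncation and shift) whose kernels are regular submodules meeting trivially, so that the diagonal map is injective and its cokernel is forced, by preinjectivity, defect and dimension count, to be $I_i$; finally you transport the result to preprojectives via $D=\Hom_k(-,k)$ and $kK^{\mathrm{op}}\cong kK$. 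What your approach buys is independence from the Hall-algebra machinery (which formally lives over finite fields) and from the external field-independence theorem of \cite{szantoszoll}; what the paper's approach buys is brevity. Two points to polish in a final write-up: (i) the realisation step is the only place needing the explicit coordinate model of $I_n$, so you should actually display the two structure maps and verify that the kernels of your truncation and shift are spanned by the stated highest and lowest basis vectors at \emph{both} vertices --- the verification is immediate and the disjointness condition is exactly $i\ge 0$, as you say, but as written it is only asserted; (ii) in the duality step $D$ reverses the order of the factors, sending $\{[P_i]\}*\{[P_j]\}$ to $\{[I_j]\}*\{[I_i]\}$, so the preprojective formula is obtained from the preinjective one with the indices interchanged and then relabelled; your phrase ``with the inequalities reversed'' covers this, but it deserves to be made explicit.
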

\begin{proof} For $k$ a finite field the formulas follow directly
from the corresponding formulas for the Ringel-Hall product (see
\cite{szanto1} for details). In \cite{szantoszoll} it is proven that
the possible middle terms in preinjective or preprojective short
exact sequences do not depend on the base field, so we are done.
\end{proof}

We describe now the product $\{[R_x(\lambda)]\}*\{[R_x(\mu)]\}$, where $\lambda$ and $\mu$ are partitions.
This is a classical result and it was studied in the equivalent context of
$p$-modules by T. Klein in \cite{klein}. So we have:
\begin{proposition}\label{RCommuteLambdaMu}$\{[R_x(\lambda)]\}*\{[R_x(\mu)]\}=\{[R_x(\mu)]\}*\{[R_x(\lambda)]\}=\{R_x(\nu)| c^{\nu}_{\lambda\mu}\neq
0\},$ where $c^{\nu}_{\lambda\mu}$ is the Littlewood-Richardson
coefficient (which is field independent).
\end{proposition}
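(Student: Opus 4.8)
The plan is to reduce the computation to a classical problem about modules over a discrete valuation ring and then to invoke Klein's theorem together with the symmetry of the Littlewood--Richardson coefficients. First I would note that, by the facts recalled in Section~3, the full subcategory of $\Mod kK$ whose objects are the regular modules supported on the single tube $\mathcal{T}_x$ is an abelian serial subcategory closed under extensions. Hence every middle term of a short exact sequence $0\to R_x(\mu)\to X\to R_x(\lambda)\to 0$ is again supported on $\mathcal{T}_x$, so $X\cong R_x(\nu)$ for some partition $\nu$; additivity of regular length forces $|\nu|=|\lambda|+|\mu|$. Since $R_x(t)$ is uniserial with unique regular composition series having all factors isomorphic to $R_x(1)$, this subcategory is equivalent to the category of finite length modules over the complete local ring $\mathcal{O}_x$, a discrete valuation ring with residue field $k'=\End(R_x(1))$ of degree $\deg x$ over $k$. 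Under this equivalence $R_x(t)$ corresponds to the length $t$ uniserial module and $R_x(\lambda)$ to the module of type $\lambda$, so that $R_x(\nu)\in\{[R_x(\lambda)]\}*\{[R_x(\mu)]\}$ if and only if over $\mathcal{O}_x$ there exists a module of type $\nu$ with a submodule of type $\mu$ and quotient of type $\lambda$.

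Next I would settle the case of a finite residue field $k'$ (equivalently $k$ finite). Here the number of such submodules of a fixed module of type $\nu$ is the Ringel--Hall number $F^{R_x(\nu)}_{R_x(\lambda)R_x(\mu)}$, which by the classical theory of Hall polynomials equals the evaluation at $q=|k'|$ of the Hall polynomial $g^{\nu}_{\lambda\mu}$. The decisive input is Klein's theorem \cite{klein}: the polynomial $g^{\nu}_{\lambda\mu}$ has leading coefficient $c^{\nu}_{\lambda\mu}$ in the variable $q$ and is identically zero precisely when $c^{\nu}_{\lambda\mu}=0$; consequently $g^{\nu}_{\lambda\mu}(q)\neq 0$ for one, equivalently every, prime power $q$ if and only if $c^{\nu}_{\lambda\mu}\neq 0$. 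This yields the stated description of $\{[R_x(\lambda)]\}*\{[R_x(\mu)]\}$ over every finite field, with a criterion that does not depend on $q$.

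Finally I would remove the finiteness hypothesis and record commutativity. Field independence for an arbitrary, possibly infinite, residue field follows because the existence of the required short exact sequence over $\mathcal{O}_x$ is a characteristic-free property: when $c^{\nu}_{\lambda\mu}\neq 0$ Klein's construction produces an explicit module of type $\nu$ with a submodule of type $\mu$ and quotient of type $\lambda$ over an arbitrary discrete valuation ring, while when $c^{\nu}_{\lambda\mu}=0$ the combinatorial interlacing constraints that such a filtration would impose on $\lambda,\mu,\nu$ cannot be met over any such ring. Commutativity, namely $\{[R_x(\lambda)]\}*\{[R_x(\mu)]\}=\{[R_x(\mu)]\}*\{[R_x(\lambda)]\}$, is then immediate from the symmetry $c^{\nu}_{\lambda\mu}=c^{\nu}_{\mu\lambda}$. \emph{The main obstacle is precisely this transfer to infinite residue fields}: Klein's result is phrased for $p$-modules, so I expect to spend most of the effort verifying that his criterion, and in particular the non-existence direction when $c^{\nu}_{\lambda\mu}=0$, is genuinely independent of the residue field rather than merely a statement about its cardinality $q$.
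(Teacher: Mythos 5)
Your proposal is correct and is essentially the paper's own approach: the paper offers no argument beyond the remark that this is a classical result studied by T.~Klein in the equivalent context of $p$-modules, which is precisely the reduction you carry out (extensions within a tube stay in the tube, the tube is equivalent to the category of finite-length modules over a discrete valuation ring with residue field $\End(R_x(1))$, and Klein's criterion in terms of Littlewood--Richardson coefficients is independent of that residue field). The transfer to arbitrary, possibly infinite, residue fields that you flag as the main obstacle is exactly what the paper's bare citation of Klein also rests on, so your fleshed-out version supplies strictly more detail than the paper itself.
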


Using our knowledge on Littlewood-Richardson coefficients we obtain
in particular the following:
\begin{corollary}\label{RCommuteLambdaN}$\{[R_x(\lambda)]\}*\{[R_x(n)]\}=\{[R_x(n)]\}*\{[R_x(\lambda)]\}=\{R_x(\nu)|
\nu-\lambda\text{ is a horizontal $n$-strip}\}.$
\end{corollary}

Using the field independence of the Littlewood-Richardson coefficients we also obtain:
\begin{corollary} \label{ProdR}For two Segre symbols $\sigma,\tau$ we have that $S(\sigma,k)*S(\tau,k)=\bigcup S(\rho,k)$, where the union is taken over a finite number of specific Segre symbols $\rho$, combinatorially (field independently) determined by the symbols $\sigma,\tau$.
\end{corollary}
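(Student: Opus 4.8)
The plan is to reduce the product to a tube-by-tube computation using the orthogonality of the homogeneous tubes, then to apply the Littlewood-Richardson description of Proposition \ref{RCommuteLambdaMu} inside each tube, and finally to repackage the result combinatorially by enumerating the ways in which the points underlying $\sigma$ and $\tau$ may coincide. First I would record the splitting of the product across tubes. Write a representative of $S(\sigma,k)$ as $M=\bigoplus_i R_{x_i}(\lambda^i)$ and of $S(\tau,k)$ as $N=\bigoplus_j R_{y_j}(\mu^j)$, where the $x_i$ (resp. $y_j$) are pairwise distinct with $\deg x_i=d_i$ (resp. $\deg y_j=e_j$). Since the tubes are pairwise orthogonal (Lemma \ref{KroneckerFactsLemma} b), we have $\Ext^1(M_z,N_{z'})=0$ for $z\neq z'$, where $M_z,N_z$ denote the parts of $M,N$ supported on the tube $\mathcal{T}_z$; hence every extension $0\to N\to X\to M\to 0$ decomposes as a direct sum $X=\bigoplus_z X_z$ of within-tube extensions $0\to N_z\to X_z\to M_z\to 0$. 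In particular $X$ is again regular, so its decomposition symbol is a Segre symbol, and because the points of $M$ are distinct and likewise those of $N$, each tube carries at most one summand from $M$ and at most one from $N$.

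Next I would encode the combinatorics. The only tubes on which something nontrivial happens are those supporting a summand of $M$, of $N$, or of both. The tubes supporting both correspond to a \emph{degree-preserving partial matching} $\phi$ between the index set of $\sigma$ and that of $\tau$ (matching $i$ to $j$ only when $d_i=e_j$, since the points must then coincide). For an unmatched index of $\sigma$ (resp. $\tau$) the corresponding tube contributes the part $(\lambda^i,d_i)$ (resp. $(\mu^j,e_j)$) unchanged. For a matched pair $(i,j)$ the tube $\mathcal{T}_z$ supports an extension of $R_z(\lambda^i)$ by $R_z(\mu^j)$, and by Proposition \ref{RCommuteLambdaMu} the possible middle terms are exactly the $R_z(\nu)$ with $c^{\nu}_{\lambda^i\mu^j}\neq 0$, contributing a part $(\nu,d_i)$. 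Collecting the parts over all tubes yields, for each choice of $\phi$ and of the partitions $\nu$ at the matched pairs, a Segre symbol $\rho=\rho_{\phi,\nu}$. This finite list of symbols depends only on $\sigma,\tau$ and on Littlewood-Richardson coefficients, hence is field independent.

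It then remains to establish the set equality $S(\sigma,k)*S(\tau,k)=\bigcup_{\phi,\nu}S(\rho_{\phi,\nu},k)$ for every $k$. For the inclusion $\subseteq$, any $X$ arising from a chosen pair $M,N$ splits as above; the actual overlap of the points of $M$ and $N$ realizes one matching $\phi$, and the within-tube rule of Proposition \ref{RCommuteLambdaMu} places $X$ in the corresponding $S(\rho_{\phi,\nu},k)$. For $\supseteq$, given $\rho_{\phi,\nu}$ with $S(\rho_{\phi,\nu},k)\neq\emptyset$ and $X\in S(\rho_{\phi,\nu},k)$, I would read off from $\phi$ which points of $X$ must host a summand of $M$, of $N$, or of both, and use the existence half of Proposition \ref{RCommuteLambdaMu} to split each matched tube.

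Since the parts of $\rho$ already sit at distinct points, the points assigned to $M$ (resp. to $N$) are automatically distinct, so the reconstructed $M\in S(\sigma,k)$ and $N\in S(\tau,k)$ are genuine and exhibit $X$ in the product. The step I expect to be the main obstacle is precisely this bookkeeping of coinciding points: one must verify that reconstructing $M$ and $N$ never requires more distinct points of a given degree than $X$ already provides, so that no field-dependent obstruction coming from the point count of $\mathbb{H}_k$ (cf. Remark \ref{NrPointsHk}) can intervene. Classes $S(\rho_{\phi,\nu},k)$ that happen to be empty over a small field simply drop out of the union on both sides, which preserves the equality.
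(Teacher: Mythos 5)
Your proof is correct and takes essentially the same route as the paper's: tube orthogonality to reduce to within-tube extensions, Proposition \ref{RCommuteLambdaMu} for the middle terms on each tube, and the observation that each resulting component can be relocated to an arbitrary point of the same degree --- your degree-preserving partial matchings are exactly the paper's three cases a), b), c). Your write-up is simply a more explicit version of the paper's sketch, spelling out the $\Ext$-splitting across tubes, both inclusions, and the point-count bookkeeping that the paper leaves implicit.
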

\begin{proof} Using Proposition \ref{RCommuteLambdaMu} the field independent combinatorial nature of the product is clear. What remains to prove is that the product is the union of full Segre classes. Suppose that the class $[R_{x_1}(\lambda^1)\oplus\dots\oplus R_{x_r}(\lambda^r)]$ occurs in the product
for some distinct points $x_1,\dots x_r\in\mathbb H_k$ such that $\deg x_i = d_i$. We prove that in this case the whole Segre class $S(\{(\lambda^1, d_1),\dots,(\lambda^r, d_r)\},k)$ occurs in the product. For a component $R_{x_i}(\lambda^i)$ we have the following three possibilities:

a) it comes from the product $[R_{x_i}(\mu^i)][R_{x_i}(\nu^i)]$, where $(\mu^i,d_i)\in\sigma$, $(\nu^i,d_i)\in\tau$, so $c^{\lambda^i}_{\mu^i\nu^i}\neq 0$.

b) $(\lambda^i,d_i)\in\sigma$,

c) $(\lambda^i,d_i)\in\tau$.

Note that in any of the cases above, the component $R_{y_i}(\lambda^i)$ can be obtained in a similar way, where $y_i\in\mathbb H_k$ is arbitrary such that $\deg y_i = d_i$.
\end{proof}
\begin{remark} One can observe that the previous corollary is valid also in the case when one of the Segre classes are empty (due to the smallness of the field). See also Remark \ref{NrPointsHk}.
\end{remark}

Next we consider the products $\{[I_n]\}*\{[R_x(\lambda)]\}$ and $\{[R_x(\lambda)]\}*\{[P_n]\}.$ We need the following lemma:

\begin{lemma} \label{preproj} Let $P_n,P_m$ be preprojective indecomposables with $n<m$. Then
there is a short exact sequence $0\to P_n\to P_m\to X\to 0$ iff $X$
satisfies the following conditions:

{\rm i)} it is a regular module with $\underline\dim
X=\underline\dim P_m-\underline\dim P_n,$

{\rm ii)} if $R_x(t)$ and $R_{x'}(t')$ are two indecomposable
components of $X$ then $x\neq x'$.
\end{lemma}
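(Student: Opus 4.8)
The plan is to prove both implications while exploiting one structural fact repeatedly: a submodule $U$ of a preprojective module $P'$ is again preprojective. Indeed, decomposing $U$ into its preprojective, regular and preinjective parts, the inclusion vanishes on the regular and preinjective parts because $\Hom(R,P')=\Hom(I,P')=0$ by Lemma \ref{KroneckerFactsLemma}(a); since the inclusion is injective, those parts are zero. As a preprojective module has defect $-1$ exactly when it has a single indecomposable summand, any preprojective submodule of dimension vector $(n+1,n)$ is isomorphic to $P_n$. Consequently, once we know $\underline\dim X=(m-n,m-n)$, a short exact sequence $0\to P_n\to P_m\to X\to 0$ exists if and only if there is an epimorphism $P_m\twoheadrightarrow X$: its kernel is a preprojective submodule of $P_m$ of dimension vector $(m+1,m)-\underline\dim X=(n+1,n)$, hence $\cong P_n$.

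For the ``only if'' direction the dimension vector of $X$ follows by subtraction, and the defect identity gives $\partial X=\partial P_m-\partial P_n=0$. To see that $X$ is regular, suppose it had an indecomposable preprojective summand $P_a$; composing $\pi\colon P_m\twoheadrightarrow X$ with the projection $X\to P_a$ produces a nonzero morphism $P_m\to P_a$ which, by Lemma \ref{KroneckerFactsLemma}(h), is a monomorphism, and being also an epimorphism it is an isomorphism. This is impossible because $\underline\dim P_m=(m+1,m)$ is not componentwise $\le\underline\dim X=(m-n,m-n)$. Hence $X$ has no preprojective summand, and then $\partial X=0$ forces no preinjective summand either, so $X$ is regular. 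For condition (ii), I apply the left exact functor $\Hom(-,R_x(1))$ to the sequence to obtain an inclusion $\Hom(X,R_x(1))\hookrightarrow\Hom(P_m,R_x(1))$. By Lemma \ref{KroneckerFactsLemma}(f) the right-hand space has dimension $\deg x$, while by Lemma \ref{KroneckerFactsLemma}(g) and the orthogonality of distinct tubes the left-hand space has dimension $k\deg x$, where $k$ is the number of indecomposable summands of $X$ supported at $x$. Thus $k\le 1$, which is exactly (ii).

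For the ``if'' direction it suffices, by the reduction above, to build an epimorphism $P_m\twoheadrightarrow X$. The heart of the matter is a single-tube step: for every $a\ge 0$ there is a surjection $P_{a+t\deg x}\twoheadrightarrow R_x(t)$. Any nonzero $f\colon P_{a+t\deg x}\to R_x(t)$ fails to be a monomorphism, since at the first vertex its source has strictly larger dimension; hence $\Ima f$ is regular by Lemma \ref{KroneckerFactsLemma}(h). Being a regular submodule of $R_x(t)$ it lies in the tube $\mathcal{T}_x$ and, by uniseriality, equals $R_x(j)$ for some $j\le t$. Therefore $f$ is surjective precisely when its composite with the regular top $R_x(t)\twoheadrightarrow R_x(1)$ is nonzero; and such an $f$ exists because applying $\Hom(P_{a+t\deg x},-)$ to $0\to R_x(t-1)\to R_x(t)\to R_x(1)\to 0$ and invoking $\Ext^1(P,R)=0$ (Lemma \ref{KroneckerFactsLemma}(a)) shows the map $\Hom(P_{a+t\deg x},R_x(t))\to\Hom(P_{a+t\deg x},R_x(1))$ is onto the nonzero space $\Hom(P_{a+t\deg x},R_x(1))$.

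Finally I would treat the general $X=\bigoplus_{i=1}^r R_{x_i}(t_i)$ by induction on $r$. Given a sequence $0\to P_n\to P_{n'}\to\bigoplus_{i<r}R_{x_i}(t_i)\to 0$, the single-tube step applied to $P_{n'}$ produces an embedding $P_{n'}\hookrightarrow P_m$ with cokernel $R_{x_r}(t_r)$; the induced cokernel $P_m/P_n$ is then an extension of $R_{x_r}(t_r)$ by $\bigoplus_{i<r}R_{x_i}(t_i)$, which splits because distinct tubes are orthogonal ($\Ext^1=0$, Lemma \ref{KroneckerFactsLemma}(b)), so $P_m/P_n\cong X$ as required. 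I expect the single-tube step to be the main obstacle: one must surject onto an indecomposable regular module of arbitrary length $t$ while keeping the total space an indecomposable preprojective. The resolution is to pin down the image through Lemma \ref{KroneckerFactsLemma}(h) and the uniserial structure of the tube, and to use the vanishing $\Ext^1(P,R)=0$ to push a homomorphism onto the regular top.
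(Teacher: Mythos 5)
Your proof is correct, and while your ``only if'' direction coincides with the paper's (excluding preprojective summands via Lemma \ref{KroneckerFactsLemma} h) plus the defect, and bounding the number of summands per tube by applying $\Hom(-,R_x(1))$), your converse takes a genuinely different route. The paper constructs a single epimorphism $P_m\to X$ directly: it first shows every nonzero $f:P_m\to X$ has regular image, then gets an epimorphism onto each summand $R_{x_i}(t_i)$ by a dimension-count contradiction (if no map $P_m\to R_x(t)$ were onto, uniseriality would force $\Hom(P_m,R_x(t))\cong\Hom(P_m,R_x(t-1))$, contradicting Lemma \ref{KroneckerFactsLemma} f)), and finally checks that the diagonal map $P_m\to\bigoplus_i R_{x_i}(t_i)$ is surjective using uniseriality of the image. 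You instead induct on the number of tubes: you establish the single-tube surjection $P_{a+t\deg x}\twoheadrightarrow R_x(t)$ by a homological lifting argument (the long exact sequence for $0\to R_x(t-1)\to R_x(t)\to R_x(1)\to 0$ together with $\Ext^1(P,R)=0$ lets you lift a nonzero map to $R_x(1)$ to a map onto the regular top, hence onto $R_x(t)$), then splice the chain $P_n\subset P_{n'}\subset P_m$ and split the resulting extension of $R_{x_r}(t_r)$ by $\bigoplus_{i<r}R_{x_i}(t_i)$ using the $\Ext^1$-orthogonality of distinct tubes (Lemma \ref{KroneckerFactsLemma} b)). Your version is more modular and makes transparent exactly where hypothesis ii) is used (the splitting step), and your explicit upfront reduction --- the sequence exists iff an epimorphism $P_m\twoheadrightarrow X$ exists, since any such kernel is automatically $\cong P_n$ --- is the same reduction the paper uses implicitly; the paper's approach, in exchange, avoids induction and produces the epimorphism onto all of $X$ in one step.
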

\begin{proof} Suppose we have a short exact
sequence $0\to P_n\to P_m\to X\to 0$. We will check the conditions
i) and ii).

Condition i). Trivially, $\underline\dim X=\underline\dim
P_m-\underline\dim P_n$ and $\partial X=0$. Note that $X$ cannot
have preprojective components, since if $P''$ would be such an
indecomposable component, then $P_m\twoheadrightarrow P''\ncong P_m$
which is impossible due to Lemma \ref{KroneckerFactsLemma} h). So $X$ is regular.

Condition ii). Suppose $X=X'\oplus R_x(t_1)\oplus...\oplus
R_x(t_l).$ Then we have a monomorphism
$\Hom(X,R_x(1))\to\Hom(P_m,R_x(1)),$ so
$\dim_k\Hom(X,R_x(1))=\dim_k\Hom(X',R_x(1))+\sum_{i=1}^{l}\dim_k\Hom(R_x(t_i),R_x(1))\leq
\dim_k\Hom(P_m,R_x(1))=\deg x$ and $\dim_k\Hom(R_x(t_i),R_x(1))=\deg x$.
It follows that $l=1$.

Conversely suppose now that $X$ is a regular module satisfying
conditions i) and ii). It is enough to show that $P_m$ projects on
$X$, since for an epimorphism $f:P_m\to X$ we have that
$\partial\Ker f=-1$, so $\Ker f\cong P_n$. Notice first that there are no monomorphisms
$P_m\to X$ because
$\underline{\dim}X=\underline{\dim}P_m-\underline{\dim}P_n<\underline{\dim}P_m.$
For a nonzero $f:P_m\to X$ we have the short exact sequence
$0\to\Ker f\to P_m\to\Ima f\to 0$. Since $\Ker f\subseteq P_m$  we
have that $\Ker f$ is preprojective (so with negative defect) and is
not 0 (because $f$ is not mono) and  $\Ima f\subseteq X$ implies
that $\Ima f$ may contain preprojectives and regulars as direct
summands (and it is nonzero since $f$ is nonzero). The equality
$\partial\Ker f+\partial\Ima f=\partial P_m=-1$ gives us
$\partial\Ima f=0$, so $\Ima f$ is regular.

For $X=R_x(t)$ we have that $\Hom(P_m,X)\neq 0$ (see Lemma \ref{KroneckerFactsLemma} f)). If there are no
epimorphisms in $\Hom(P_m,R_x(t))$ then using the remarks above and
the uniseriality of regulars we would have
$\Hom(P_m,R_x(t))\cong\Hom(P_m,R_x(t-1))$ a contradiction. So we
have an epimorphism $P_m\to X$.

Suppose now that $X=R_{x_1}(t_1)\oplus...\oplus R_{x_l}(t_l)$. From
the discussion above we have the epimorphisms $f_i:P_m\to
R_{x_i}(t_i)$. Let $f:P_m\to X$, $f(x)=\sum f_i(x)$ the diagonal
map. We have that $\Ima f$ is regular so due to uniseriality $\Ima
f=R_{x_1}(t'_1)\oplus...\oplus R_{x_l}(t'_l)$ with
$R_{x_i}(t'_i)\subseteq R_{x_i}(t_i)$. Since $f_i=p_if$ are
epimorphisms we have that $R_{x_i}(t'_i)= R_{x_i}(t_i)$ so $f$ is an
epimorphism.
\end{proof}
\begin{proposition} \label{ProdInRlambda}We have:
$$\{[R_x(\lambda)]\}*\{[P_n]\}=\{[P_{n+t\deg x}\oplus R_x(\mu)]\,|\,\text{where $\lambda-\mu$ is a horizontal strip of length $t$, for some
$t\in\mathbb N$}\}.$$
Dually we have:
$$\{[I_n]\}*\{[R_x(\lambda)]\}=\{[R_x(\mu)\oplus I_{n+t\deg x}]\,|\,\text{where $\lambda-\mu$  is a horizontal strip of length $t$, for some
$t\in\mathbb N$}\}.$$
\end{proposition}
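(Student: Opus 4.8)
The plan is to prove the first identity directly and deduce the second by duality, routing both inclusions through the Green-type Corollary \ref{Greencor} whose frame will have a regular (Pieri) sequence as its bottom edge. For necessity, suppose $[X]\in\{[R_x(\lambda)]\}*\{[P_n]\}$, witnessed by a short exact sequence $0\to P_n\to X\to R_x(\lambda)\to 0$. First I would pin down the shape of $X$ using defect additivity and the vanishings of Lemma \ref{KroneckerFactsLemma}a). Since $\Hom(I,R)=\Hom(I,P)=0$, any preinjective summand of $X$ maps to zero in $R_x(\lambda)$, hence embeds in $P_n$, which is impossible; so $X=P'\oplus R'$ with $P'$ preprojective and $R'$ regular. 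Comparing defects, $\partial X=\partial P_n=-1=\partial P'$ forces $P'=P_j$ indecomposable, and using $\Hom(R_y(s),R_x(\lambda))=0$ for $y\neq x$ (orthogonality of tubes, Lemma \ref{KroneckerFactsLemma}b)) together with $\Hom(R,P)=0$, every summand of $R'$ lies in $\mathcal T_x$, so $R'=R_x(\mu)$. Comparing dimension vectors gives $j=n+t\deg x$ with $t=|\lambda|-|\mu|$.

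Now the middle term is the direct sum $P_j\oplus R_x(\mu)$ and $\Ext^1(P_j,R_x(\mu))=0$, so I would feed the sequence into Corollary \ref{Greencor} with $M=P_j$, $N=R_x(\mu)$, $Y=P_n$ and quotient $R_x(\lambda)$, obtaining a frame with corners $A,B,C,D$. The crux is reading off that frame. The corner $D$ embeds both in $P_n$ (so $D$ is preprojective) and in $R_x(\mu)$. Because a submodule of a regular has no preinjective summand and a quotient of a regular has no preprojective summand (both again from $\Hom(I,R)=\Hom(R,P)=0$), the left edge $0\to D\to R_x(\mu)\to C\to 0$ and the bottom edge $0\to C\to R_x(\lambda)\to A\to 0$ force $C$ to be regular, whence $\partial D=0$ and $D=0$. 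Then $B\cong P_n$, the right edge becomes $0\to P_n\to P_j\to A\to 0$, so by Lemma \ref{preproj} $A$ is regular with summands in pairwise distinct tubes, while the bottom edge exhibits $A$ as a quotient of $R_x(\lambda)$, forcing $A\in\add\mathcal T_x$. Distinct tubes together with a single tube make $A=R_x(t)$ indecomposable, and the bottom edge is then the regular sequence $0\to R_x(\mu)\to R_x(\lambda)\to R_x(t)\to 0$, i.e. $[R_x(\lambda)]\in\{[R_x(t)]\}*\{[R_x(\mu)]\}$. By Proposition \ref{RCommuteLambdaMu} and Pieri's rule (Corollary \ref{RCommuteLambdaN}) this says precisely that $\lambda-\mu$ is a horizontal strip of length $t$.

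For sufficiency, given $\mu$ with $\lambda-\mu$ a horizontal strip of length $t$, I would build the frame with $D=0$ (so the top edge is $P_n\xrightarrow{\sim}P_n$ and the left edge is $R_x(\mu)\xrightarrow{\sim}R_x(\mu)$), right edge the preprojective sequence $0\to P_n\to P_{n+t\deg x}\to R_x(t)\to 0$ furnished by Lemma \ref{preproj}, and bottom edge the regular sequence $0\to R_x(\mu)\to R_x(\lambda)\to R_x(t)\to 0$, which exists by Pieri's rule exactly because $\lambda-\mu$ is a horizontal $t$-strip. This frame is exact, so Corollary \ref{Greencor} produces $0\to P_n\to P_{n+t\deg x}\oplus R_x(\mu)\to R_x(\lambda)\to 0$, placing $[P_{n+t\deg x}\oplus R_x(\mu)]$ in the product. (Concretely the same module is the pullback of $P_{n+t\deg x}\twoheadrightarrow R_x(t)\twoheadleftarrow R_x(\lambda)$, the splitting of $0\to R_x(\mu)\to E\to P_{n+t\deg x}\to 0$ coming from $\Ext^1(P,R)=0$.) The dual formula for $\{[I_n]\}*\{[R_x(\lambda)]\}$ then follows by applying the standard duality on $\Mod kK$ that interchanges $P_m\leftrightarrow I_m$ and preserves the degree of a tube, the horizontal-strip condition being self-dual.

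The main obstacle is the frame analysis in the necessity half: everything hinges on showing $D=0$ and, above all, that the quotient corner $A$ is the \emph{indecomposable} $R_x(t)$ rather than a decomposable regular of regular length $t$. This is exactly where the distinct-tube conclusion of Lemma \ref{preproj} meets the orthogonality of tubes, and it is what converts the preprojective extension problem into the purely combinatorial Pieri rule.
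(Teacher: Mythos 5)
Your proposal is correct and follows essentially the same route as the paper: pin down the middle term as $P_{n+t\deg x}\oplus R_x(\mu)$ via defect and Hom-vanishing, apply Corollary \ref{Greencor} with $M=P_{n+t\deg x}$, $N=R_x(\mu)$, force the frame corners ($D=0$, $B\cong P_n$, $A=R_x(t)$ via Lemma \ref{preproj} plus tube orthogonality), and reduce the bottom edge to the Pieri rule of Corollary \ref{RCommuteLambdaN}. The only cosmetic differences are that the paper treats $\mu=(0)$ separately and rules out $B=0$ directly, whereas you derive $D=0$ from the regularity of $C$; both arguments are sound.
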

\begin{proof}We prove the first formula.
Suppose we have a short exact sequence $$0\to P_n\to X\overset{g}\to
R_x(\lambda)\to 0.$$ Note that we can't have preinjective components
in $X$ (since due to Lemma \ref{KroneckerFactsLemma} a) they would embed into $\Ker g\cong P_n$). Since
$\partial X=-1$, it follows using Lemma \ref{KroneckerFactsLemma} c) that $X$ is of
the form $X=P_{n+t\deg x}\oplus R_x(\mu)$ where $\mu$ is a partition
with $|\mu|\leq |\lambda|$ and $t=|\lambda|-|\mu|$.

If $\mu=(0)$ then by Lemma \ref{preproj} we have an exact sequence $0\to
P_n\to P_{n+t\deg x}\to R_x(\lambda)\to 0$ iff $\lambda=(t)$ i.e. iff
$\lambda-(0)$ is a horizontal $t$-strip.

If $\mu\neq (0)$ then we apply Corollary \ref{Greencor} with choices
$X=R_x(\lambda)$, $Y=P_n$, $M=P_{n+t\deg x}$ and
$N=R_x(\mu)$. It follows that we have an exact sequence $0\to
P_n\to P_{n+t\deg x}\oplus R_x(\mu)\to R_x(\lambda)\to 0$ iff $\exists
A,B,C,D\in\Mod kK$ such that the frame below is exact.
$$\xymatrix{D\ar[r]\ar[d] & P_n\ar[r] & B\ar[d]\\ R_x(\mu)\ar[d] & &
P_{n+t\deg x}\ar[d]\\ C\ar[r]& R_x(\lambda)\ar[r] & A}$$ By Lemma \ref{KroneckerFactsLemma} a)
$B,D$ are preprojectives or 0. Note that $B,D$ can't be both
preprojectives (due to the defect)  and also if $B=0$ then $A=P_{n+t\deg x}$, a
contradiction since $R_x(\lambda)$ would project on a
preprojective. This means that we must have $D=0$, so $B=P_n$,
$C=R_x(\mu)$ and using Lemma \ref{preproj} it follows that $A=R_x(t)$ (where
$t=|\lambda-\mu|$). So we have an exact sequence $0\to P_n\to
P_{n+t\deg x}\oplus R_x(\mu)\to R_x(\lambda)\to 0$ iff the frame below
is exact. $$\xymatrix{0\ar[r]\ar[d] & P_n\ar[r] & P_n\ar[d]\\
R_x(\mu)\ar[d] & & P_{n+t\deg x}\ar[d]\\ R_x(\mu)\ar[r]&
R_x(\lambda)\ar[r] & R_x(t)}$$ Applying Corollary \ref{RCommuteLambdaN} it follows
that $\lambda-\mu$ must be a horizontal $t$-strip.
\end{proof}
For $\lambda=(m)$ we have in particular:
\begin{corollary} $\{[R_x(m)]\}*\{[P_n]\}=\{[P_{n+i\deg x}\oplus R_x(m-i)]|\text{ where
$i=\overline{0,m}$}\}$.

Dually

$\{[I_n]\}*\{[R_x(m)]\}=\{[R_x(m-i)\oplus I_{n+i\deg x}]|\text{ where
$i=\overline{0,m}$ }\}$.

\end{corollary}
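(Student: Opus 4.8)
The plan is to obtain the statement as the special case $\lambda=(m)$ of Proposition \ref{ProdInRlambda}. That proposition already computes $\{[R_x(\lambda)]\}*\{[P_n]\}$ for an arbitrary partition $\lambda$, describing the admissible middle terms as $P_{n+t\deg x}\oplus R_x(\mu)$, where $\lambda-\mu$ ranges over all horizontal strips of length $t$. Hence the only work remaining is the purely combinatorial task of enumerating those pairs $(\mu,t)$ when $\lambda$ is the single-row partition $(m)$.

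First I would observe that any partition $\mu$ with $\mu\subseteq(m)$ must itself be a single-row partition. Indeed, the Young diagram of $(m)$ is a single row of $m$ boxes, so a diagram fitting inside it cannot have a second row; thus $\mu=(j)$ for some $0\leq j\leq m$. Writing $i=m-j$, the skew shape $(m)-(j)$ consists of the boxes in columns $j+1,\dots,m$ of the single row, i.e. exactly one box in each of $i$ distinct columns. Such a shape is automatically a horizontal strip, and its length is $t=i$. Therefore, as $\mu$ ranges over the subpartitions of $(m)$, the admissible data are precisely $\mu=(m-i)$ with $t=i$ and $i=0,1,\dots,m$.

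Substituting these into the formula of Proposition \ref{ProdInRlambda} yields the middle terms $P_{n+i\deg x}\oplus R_x(m-i)$ for $i=0,\dots,m$, which is exactly the asserted description of $\{[R_x(m)]\}*\{[P_n]\}$. The boundary value $i=m$ corresponds to $\mu=(0)$, i.e. $R_x(0)=0$, giving the pure preprojective middle term $P_{n+m\deg x}$ arising from a short exact sequence $0\to P_n\to P_{n+m\deg x}\to R_x(m)\to 0$; this is consistent with Lemma \ref{preproj}. The dual statement for $\{[I_n]\}*\{[R_x(m)]\}$ follows in precisely the same manner from the dual formula in Proposition \ref{ProdInRlambda}.

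Since this is a direct specialization, there is essentially no obstacle. The only point requiring any care is the combinatorial claim that the subdiagrams of a single row are themselves single rows and that each such skew shape is a horizontal strip, both of which are immediate from the definitions.
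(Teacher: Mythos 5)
Your proposal is correct and follows exactly the paper's route: the paper presents this corollary as an immediate specialization of Proposition \ref{ProdInRlambda} to $\lambda=(m)$, with no further argument needed. Your explicit enumeration of the subpartitions $\mu=(m-i)$ of the single row $(m)$ and the observation that each skew shape $(m)-(m-i)$ is a horizontal strip of length $i$ simply makes that specialization fully explicit.
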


Applying the previous corollary inductively, we obtain the following:
\begin{corollary}\label{ProdIR} a) We have $\mathcal{R}_n*\{[P_m]\}=(\{[P_m]\}*\mathcal{R}_n)\cup(\{[P_{m+1}]\}*\mathcal{R}_{n-1})\cup\dots\cup\{[P_{m+n}]\}$.

b) For a Segre symbol $\sigma=\{(\lambda^1, d_1),\dots,(\lambda^r, d_r)\}$ we have that $S(\sigma,k)*\{[P_m]\}=\bigcup \{[P_{m+t}]\}*S(\tau,k)$, where the union is taken over all Segre symbols of the form $\tau=\{(\mu^1, d_1),\dots,(\mu^r, d_r)\}$ with $\lambda^i-\mu^i$ a horizontal strip of lenght $t_i$ and $t=t_1d_1+\dots+t_rd_r$.

The preinjective version of the formulas above follows dually.
\end{corollary}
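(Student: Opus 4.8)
The plan is to prove part (a) first by induction on $n$, then derive part (b) by applying the field-independent commutation of the regular parts. For part (a), I would start from the definition $\mathcal{R}_n=\bigcup S(\sigma,k)$ over all Segre symbols with $\sum a_id_i=n$, and reduce to understanding how a single regular summand $R_x(\lambda)$ extends by $P_m$. The key input is Proposition \ref{ProdInRlambda}, which tells us exactly that $\{[R_x(\lambda)]\}*\{[P_m]\}=\{[P_{m+t\deg x}\oplus R_x(\mu)]\mid \lambda-\mu\text{ a horizontal $t$-strip}\}$. First I would write an arbitrary element of $\mathcal{R}_n$ as $R=R_{x_1}(\lambda^1)\oplus\dots\oplus R_{x_r}(\lambda^r)$ and, using associativity of $*$ together with Corollary \ref{ProdPRI}, move the $P_m$ factor successively past each regular summand. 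At each step Proposition \ref{ProdInRlambda} converts $R_{x_i}(\lambda^i)*P_{m'}$ into a sum of terms $P_{m'+t_i\deg x_i}\oplus R_{x_i}(\mu^i)$, so the net effect of passing $P_m$ through all the regular summands is to produce terms of the form $P_{m+t}\oplus R'$, where $t=\sum t_i\deg x_i$ and $R'$ is the modified regular module.

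The content of part (a) is then a bookkeeping identity organizing these outputs by the total defect-shift $t$, which ranges over $0,1,\dots,n$. For the extreme case I expect the summand $\{[P_{m+n}]\}$ to arise precisely when every regular summand is entirely absorbed (each $\mu^i=(0)$), forcing $R'=0$ and $t=n$; here Lemma \ref{preproj} guarantees that $P_{m+n}$ alone is a legitimate middle term. For intermediate $t$, the regular remainder of total dimension $n-t$ lands in $\mathcal{R}_{n-t}$, giving the term $\{[P_{m+t}]\}*\mathcal{R}_{n-t}$; the natural way to assemble this cleanly is by induction, peeling off one regular summand and invoking the inductive hypothesis on the smaller index. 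I would verify both inclusions: every extension of $R\in\mathcal{R}_n$ by $P_m$ has middle term of the listed shape (this direction uses Proposition \ref{ProdInRlambda} and the defect additivity noted after Lemma \ref{KroneckerFactsLemma}), and conversely each listed module is realizable as such a middle term (using associativity to split the regular part off and reattach it).

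For part (b) I would fix a Segre symbol $\sigma=\{(\lambda^1,d_1),\dots,(\lambda^r,d_r)\}$ and run the same passage of $P_m$ through the components, but now tracking the \emph{full Segre class} rather than individual points. Because Proposition \ref{ProdInRlambda} applies uniformly to $R_x(\lambda^i)$ for any point $x$ of degree $d_i$, and because horizontal strips $\lambda^i-\mu^i$ are purely combinatorial data independent of $k$, the output decomposes as $\bigcup\{[P_{m+t}]\}*S(\tau,k)$ over all Segre symbols $\tau=\{(\mu^1,d_1),\dots,(\mu^r,d_r)\}$ with $\lambda^i-\mu^i$ a horizontal strip of length $t_i$ and $t=\sum t_id_i$. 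The point requiring care—analogous to the argument in the proof of Corollary \ref{ProdR}—is that the result is genuinely a union of \emph{complete} Segre classes: since the choice of distinct points of the prescribed degrees is free, any one realization of a term $P_{m+t}\oplus R_{y_1}(\mu^1)\oplus\dots\oplus R_{y_r}(\mu^r)$ forces all others with the same Segre symbol $\tau$ to occur as well. I expect the main obstacle to be the combinatorial organization in part (a)—keeping the index shifts consistent across all summands and confirming that the defect-shift $t$ exhausts exactly the range $0$ to $n$—rather than any homological subtlety, since all the homological work is already packaged in Proposition \ref{ProdInRlambda} and Lemma \ref{preproj}.
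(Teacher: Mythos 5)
Your proposal is correct and follows essentially the same route as the paper, which gives no detailed argument at all but simply states that the corollary follows ``by applying the previous corollary inductively'': your plan fleshes out exactly this iteration, passing $P_m$ through the regular summands via Proposition \ref{ProdInRlambda} (of which the preceding corollary is the single-part case), using Corollary \ref{ProdPRI} and associativity for the bookkeeping, and adapting the Segre-class completeness argument from Corollary \ref{ProdR} for part (b). The only place where your sketch is lighter than it should be is the reverse inclusion (``reattach it''), which requires exhibiting, for a given remainder in $\mathcal{R}_{n-t}$, an element of $\mathcal{R}_n$ shrinking to it --- done by enlarging an existing degree-one summand by $t$ or adjoining a fresh degree-one point $R_z(t)$, a step that always succeeds since $\mathbb{H}_k$ has at least three points of degree one over any field.
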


Finally we consider the product $\{[I_n]\}*\{[P_m]\}$.
\begin{proposition} \label{ProdIP} We have

$\{[I_n]\}*\{[P_m]\}=\mathcal{R}_{n+m+1}\cup\{[P_m\oplus I_n]\}.$
\end{proposition}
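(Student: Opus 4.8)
The plan is to prove the two inclusions separately, with the nontrivial content being $\mathcal{R}_{n+m+1}\subseteq\{[I_n]\}*\{[P_m]\}$. Throughout, any element of the product is the middle term of an exact sequence $0\to P_m\to X\to I_n\to 0$, so $\underline{\dim}X=(n+m+1,n+m+1)$ and $\partial X=\partial P_m+\partial I_n=0$. First I would establish the easy inclusion $\{[I_n]\}*\{[P_m]\}\subseteq\mathcal{R}_{n+m+1}\cup\{[P_m\oplus I_n]\}$ by a defect-and-dimension count on the indecomposable summands of $X=P\oplus R\oplus I$. Applying $\Hom(-,P_{m'})$ to the sequence and using $\Hom(I_n,P_{m'})=0$ from Lemma \ref{KroneckerFactsLemma} a) gives an embedding $\Hom(X,P_{m'})\hookrightarrow\Hom(P_m,P_{m'})$; since a preprojective summand $P_{m'}$ of $X$ forces $\Hom(X,P_{m'})\neq 0$, Lemma \ref{KroneckerFactsLemma} c) yields $m'\geq m$. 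Dually, applying $\Hom(I_{n'},-)$ and using $\Hom(I_{n'},P_m)=0$ shows every preinjective summand $I_{n'}$ satisfies $n'\geq n$ by Lemma \ref{KroneckerFactsLemma} d).

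With these constraints the count closes quickly. Writing $p$ and $q$ for the numbers of preprojective and preinjective indecomposable summands, $\partial X=0$ gives $p=q$. Comparing first coordinates of the dimension vectors, and using $m_i\geq m$, $n_j\geq n$ for the summands together with $\underline{\dim}R\geq(0,0)$, one gets $n+m+1\geq p(m+1)+pn=p(m+n+1)$, hence $p=q\leq 1$. If $p=q=0$ then $X$ is regular of the correct dimension, so $[X]\in\mathcal{R}_{n+m+1}$; if $p=q=1$ the same inequality is forced to be an equality, pinning down $m'=m$, $n'=n$ and zero regular part, i.e. $X\cong P_m\oplus I_n$. This settles the containment $\subseteq$.

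For the reverse inclusion, the split sequence $0\to P_m\to P_m\oplus I_n\to I_n\to 0$ immediately gives $[P_m\oplus I_n]\in\{[I_n]\}*\{[P_m]\}$. The substantive point is that \emph{every} regular $[R]\in\mathcal{R}_{n+m+1}$ is realized as a middle term. I would stress here that, unlike the products $\{[I_n]\}*\{[R_x(\lambda)]\}$ and $\{[R_x(\lambda)]\}*\{[P_n]\}$ treated via Corollary \ref{Greencor}, that corollary is unavailable now because $\Ext^1(I_n,P_m)=n+m+2\neq 0$ by Lemma \ref{KroneckerFactsLemma} e); this is the genuinely non-split product, so the realization must be produced by a direct construction of short exact sequences. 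Concretely, for $R=R_{x_1}(t_1)\oplus\dots\oplus R_{x_r}(t_r)$ with $\sum t_i\deg x_i=n+m+1$ I would exhibit a surjection $g\colon R\to I_n$ (equivalently an embedding $P_m\hookrightarrow R$) whose kernel is the \emph{indecomposable} preprojective $P_m$ and whose cokernel is the \emph{indecomposable} preinjective $I_n$; since $\dim_k\Hom(R,I_n)=n+m+1>0$ and $\dim_k\Hom(P_m,R)=n+m+1>0$ by Lemma \ref{KroneckerFactsLemma} f), such maps are abundant, and Lemma \ref{KroneckerFactsLemma} h), i) together with the uniseriality of the tubes control the images and hence the kernel/cokernel. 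The cleanest uniform route is the Kronecker matrix-pencil description: the middle term corresponds to a block upper-triangular pencil with the fixed (rectangular, "singular") diagonal blocks coming from $P_m$ and $I_n$, and the extension datum filling the off-diagonal block; one then chooses this datum so that the completed square pencil is nonsingular with prescribed elementary divisors, i.e. realizing exactly the eigenvalue/Jordan data of the given $R$.

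The main obstacle is precisely this last realization step: verifying that \emph{all} regular isomorphism types of dimension $(n+m+1,n+m+1)$ — across all Segre classes, all choices of points $x_i$ and all Jordan data $t_i$ — actually occur, and that in each case the kernel and cokernel are the indecomposable $P_m$ and $I_n$ rather than decomposable modules of defect $-1$ and $+1$. I expect to handle this either by the explicit matrix-pencil completion just described (which is manifestly field independent and so also delivers the asserted combinatorial, field-independent description of the product), or by an induction on the regular blocks, peeling off one regular simple at a time and using Lemma \ref{preproj} and the already-established particular products to keep the complementary defect concentrated in a single indecomposable summand at each stage.
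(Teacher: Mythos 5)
Your forward inclusion has a genuine gap at its final step. From $p=q=0$ you conclude that ``$X$ is regular of the correct dimension, so $[X]\in\mathcal{R}_{n+m+1}$,'' but $\mathcal{R}_{n+m+1}$ is \emph{not} the set of all regular classes of dimension $(n+m+1,n+m+1)$: by its definition the indecomposable summands must lie in pairwise different tubes. A module such as $R_x(t_1)\oplus R_x(t_2)$ with two summands on the same tube has defect $0$ and can have the right dimension vector, yet it does not belong to $\mathcal{R}_{n+m+1}$ (and indeed it never occurs as a middle term), so your dimension-and-defect count alone does not prove the inclusion; the step as written is a non sequitur. The paper closes exactly this point with a separate argument: writing $X=X'\oplus R_x(t_1)\oplus\dots\oplus R_x(t_l)$, applying $\Hom(-,R_x(1))$ to the sequence and using $\Hom(I_n,R_x(1))=0$ gives an embedding $\Hom(X,R_x(1))\hookrightarrow\Hom(P_m,R_x(1))$, whose dimension is $\deg x$ by Lemma \ref{KroneckerFactsLemma} f); since each summand $R_x(t_i)$ already contributes $\deg x$ to $\dim_k\Hom(X,R_x(1))$ by Lemma \ref{KroneckerFactsLemma} g), necessarily $l=1$. (Your way of ruling out preprojective and preinjective summands, via $\Hom$-vanishing and Lemma \ref{KroneckerFactsLemma} c), d), is correct and is a mild variant of the paper's argument, which instead composes with the projections and uses Lemma \ref{KroneckerFactsLemma} h), i).)

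The reverse inclusion is also not established. You correctly observe that Corollary \ref{Greencor} is unavailable here because $\Ext^1(I_n,P_m)\neq 0$, and that one must construct, for \emph{every} $[R]\in\mathcal{R}_{n+m+1}$, an exact sequence $0\to P_m\to R\to I_n\to 0$; but what you offer is a plan (a matrix-pencil completion with prescribed elementary divisors, or an induction peeling off regular simples), not a proof, and you yourself flag as the main obstacle precisely the verification that the kernel and cokernel are the indecomposables $P_m$ and $I_n$ rather than decomposable modules of defect $-1$ and $+1$. That is the part that needs an actual argument. For comparison, the paper does not reprove this step from scratch either: it invokes the proof of Lemma 3.2 of \cite{szanto2}, where the construction is carried out. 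So the realization step is indeed the expected hard point, but your proposal leaves it open rather than resolving it, and in addition the easy direction needs the distinct-tubes argument above to be correct.
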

\begin{proof} Suppose first that
$X\ncong P_m\oplus I_n$. Then we prove that there is an exact sequence of the form $0\to
P_m\to X\to I_n\to 0$ iff $X$ is a regular module having indecomposable components from pairwise different tubes and $\underline\dim X=\underline\dim P_m+\underline\dim I_n$.

Suppose we have a short exact sequence $0\to P_m\overset{f}\to
X\overset{g}\to I_n\to 0$. Then $\underline\dim X=\underline\dim
P_m+\underline\dim I_n$ and $\partial X=\partial P_m+\partial
I_n=0$. Suppose $X=P'\oplus R\oplus I'$ (where $P'$, $R$ and $I'$
are preprojective, preinjective and regular modules). Note that
$p_{P'}f:P_m\to P'$ must be nonzero so it is a monomorphism (see
Lemma \ref{KroneckerFactsLemma} h)) which means that $\underline\dim P_m\leq\underline\dim
P'$. In the same way $fq_{I'}:I'\to I_n$ must be nonzero so it is an
epimorphism (see Lemma \ref{KroneckerFactsLemma} i)) which means that $\underline\dim
I_n\leq\underline\dim I'$. But $\underline\dim P_m+\underline\dim
I_n=\underline\dim P'+\underline\dim R+\underline\dim I'$ which
implies $R=0$ and $p_{P'}f$, $fq_{I'}$ are isomorphisms, so $X\cong
P_m\oplus I_n$ a contradiction. This means that $X$ is regular.

Suppose $X=X'\oplus R_x(t_1)\oplus...\oplus R_x(t_l)$, then we have
the monomorphism $0\to\Hom(X,R_x(1))\to\Hom(P_m,R_x(1)),$ since
$\Hom(I_n,R_x(1))=0$. It follows that
$$\dim_k\Hom(X,R_x(1))=\dim_k\Hom(X',R_x(1))+\sum_{i=1}^{l}\dim_k\Hom(R_x(t_i),R_x(1))\leq\dim_k\Hom(P_m,R_x(1))=\deg x$$ and $\dim_k\Hom(R_x(t_i),R_x(1))=\deg x$, so $l=1$.

Conversely, suppose that $X$ is a regular module having indecomposable components from pairwise different tubes and $\underline\dim
X=\underline\dim P_m+\underline\dim I_n$. Repeating the proof of Lemma 3.2. in
\cite{szanto2} the existence of an exact sequence $0\to
P_m\to X\to I_n\to 0$ follows.

\end{proof}

Using the previous results on particular extension monoid products (more precisely Corollaries \ref{ProdPRI}, \ref{ProdR}, \ref{ProdIR} and Proposition \ref{ProdIP}) it follows inductively that the extension monoid product of Kronecker modules is field independent in general up to Segre classes. More precisely we obtain the following theorem:
\begin{theorem} For two decomposition symbols $\alpha,\beta$ we have that $S(\alpha,k)*S(\beta,k)=\bigcup S(\gamma,k)$, where the union (which is disjoint) is taken over a finite number of specific decomposition symbols $\gamma$ combinatorially (field independently) determined by the symbols $\alpha,\beta$.
\end{theorem}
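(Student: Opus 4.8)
The plan is to reduce the general product $S(\alpha,k)*S(\beta,k)$ to the particular products already computed, and then to argue that each step preserves the property of being a field-independent union of full decomposition classes. First I would write $\alpha=(\mu,\sigma)$ and $\beta=(\mu',\sigma')$, so that a representative of $S(\alpha,k)$ has the form $M(\mu,k)\oplus R$ with $R$ ranging over $S(\sigma,k)$, and similarly for $\beta$. By Corollary \ref{ProdPRI} each of $M(\mu,k)\oplus R$ and $M(\mu',k)\oplus R'$ decomposes as an iterated extension monoid product of the elementary building blocks $\{[P_i]\}$, $\{[I_j]\}$ and $\{[R_x(\nu)]\}$, listed in preprojective-regular-preinjective order. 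Using associativity and distributivity of $*$ over unions, the whole product $S(\alpha,k)*S(\beta,k)$ therefore becomes a single long $*$-product of such elementary factors, and the task is to reorder and evaluate it using the particular products from Propositions \ref{ProdIiIj}, \ref{RCommuteLambdaMu}, \ref{ProdInRlambda}, \ref{ProdIP} together with Corollaries \ref{ProdR} and \ref{ProdIR}.

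The key combinatorial mechanism is a normalization (or ``straightening'') procedure: I would repeatedly move factors past one another so as to collect all the preprojective blocks on the left, all the preinjective blocks on the right, and all the regular blocks in the middle. The admissible moves are exactly the particular products established earlier. The crucial one is Proposition \ref{ProdIP}, $\{[I_n]\}*\{[P_m]\}=\mathcal{R}_{n+m+1}\cup\{[P_m\oplus I_n]\}$, which converts an out-of-order preinjective-then-preprojective adjacency into either a genuinely reordered term $\{[P_m\oplus I_n]\}$ or a purely regular module of total dimension $(n+m+1,n+m+1)$. In the latter case new regular factors are created, which must then be absorbed into the regular part; this is handled by Corollary \ref{ProdIR}, which describes how a regular class commutes past a preprojective factor (producing again preprojective-times-regular terms), and dually for preinjectives, and by Proposition \ref{RCommuteLambdaMu}/Corollary \ref{ProdR}, which handle products of regulars on the same or different tubes via Littlewood-Richardson coefficients. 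Each application either strictly reduces the number of out-of-order adjacencies or strictly reduces the total dimension sitting in a ``wrong'' position, so the process terminates; at the end every surviving term is a preprojective block, times a regular block, times a preinjective block, i.e. an element of some $S(\gamma,k)$.

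What remains is to verify that throughout this process we always deal with \emph{full} decomposition classes and that the resulting symbols $\gamma$ are determined combinatorially, independently of $k$. The field independence of the numerical data is immediate from the earlier results: Propositions \ref{ProdIiIj} and \ref{ProdInRlambda}, Corollary \ref{ProdR} and Proposition \ref{ProdIP} all produce outputs described purely by partition combinatorics (horizontal strips, Littlewood-Richardson coefficients, defect and dimension bookkeeping) with no reference to the cardinality of $k$. The fact that one obtains unions of \emph{full} Segre (hence decomposition) classes rather than individual isomorphism classes is exactly the content of Corollary \ref{ProdR}, whose proof shows that the choice of the distinct points $x_i\in\mathbb{H}_k$ is free as long as the degrees $d_i$ are respected; I would invoke this repeatedly to guarantee that point-choices never obstruct completeness of a class. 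Disjointness of the union follows from the observation, recorded in Section 3, that $S(\gamma,k)\cap S(\gamma',k)=\emptyset$ for $\gamma\neq\gamma'$, after merging any repeated symbols.

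The main obstacle I anticipate is the bookkeeping when Proposition \ref{ProdIP} forces a preinjective-preprojective pair to collapse into the regular part: one must confirm that the newly created regular modules, when commuted leftward and rightward through the remaining preprojective and preinjective blocks via Corollary \ref{ProdIR}, still assemble into full decomposition classes with combinatorially determined symbols, and in particular that the interaction with regular summands already present (possibly on coincident tubes) is governed field-independently by the Littlewood-Richardson rule of Proposition \ref{RCommuteLambdaMu}. Making the termination of this straightening explicit, rather than merely plausible, is where the real work lies; once a suitable monovariant (such as the lexicographically ordered pair consisting of the number of misordered adjacencies and the dimension lodged in the regular part) is in place, the induction closes and the theorem follows.
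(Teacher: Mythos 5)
Your proposal is correct and takes essentially the same route as the paper: the paper's entire ``proof'' of this theorem is the one-sentence remark that it follows inductively from Corollaries \ref{ProdPRI}, \ref{ProdR}, \ref{ProdIR} and Proposition \ref{ProdIP}, which is precisely the straightening induction you describe. Your write-up is in fact more detailed than the paper's own argument (which leaves the reordering, termination, and fullness-of-classes bookkeeping entirely implicit), so the loose end you honestly flag --- making the monovariant for termination precise, e.g.\ by evaluating whole preprojective/preinjective blocks at once via Proposition \ref{ProdIiIj} rather than by local swaps --- is a gap in the paper as much as in your proposal.
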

\begin{remark} One can observe that the theorem above is valid also in the case when one of the decomposition classes is empty (due to the smallness of the field). See also Remark \ref{NrPointsHk}. Also
\end{remark}

\section{\bf Combinatorial aspects of extensions of preinjective Kronecker modules}
There are some very interesting combinatorial properties related with the embedding and extension of preinjective modules. Particular orderings known from partition combinatorics and their generalizations (such as dominance, weighted dominance, generalized majorization) play an important role in this context.
We recall first the definition of these orderings.
\begin{definition} Let $a=(a_1,...a_n),b=(b_1,...,b_n)\in\mathbb Z^n$ .

\noindent The dominance partial ordering is defined as follows (see \cite{Macd}):
$$a\leqslant b\text{ iff }a_1\leq b_1,\  a_1+a_2\leq b_1+b_2,\ \dots,\ a_1+...+a_{n-1}\leq
b_1+...+b_{n-1}\text{ and }a_1+....+a_n\leq b_1+...+b_n.$$
In case $a_1+....+a_n=b_1+...+b_n$ (for example when $a,b$ are partitions of the same number) we will use the notation $a\preccurlyeq b$.

\noindent The weighted dominance partial ordering is defined as follows (see \cite{szantoszoll}):
$$a\ll b\text{ iff } (a_1,2a_2,...,na_n)\leqslant (b_1,2b_2,...,nb_n).$$

\noindent Following Baraga\~na, Zaballa, Mondi\'e, Dodig,  Sto\u si\'c one can define the so-called generalized majorization (see \cite{dodig1},\cite{dodig2}). This generalization of the dominance ordering of partitions plays an important role in the combinatorial background of matrix pencil completion problems. Consider the partitions $a=(a_1,...,a_n),b=(b_1,...,b_m),c=(c_1,...,c_{m+n})$. Then we say that the pair $(b,a)$ is a generalized majorization of $c$ (and denote it by $c\prec (b,a)$) iff $$b_i\geq c_{i+n},\ i=\overline{1,m},$$ $$ \sum_{i=1}^{m}b_{i}+\sum_{i=1}^{n}a_{i}=\sum_{i=1}^{m+n}c_{i},$$$$\sum_{i=1}^{h_{q}}c_{i}-\sum_{i=1}^{h_{q}-q}b_{i}\leq\sum_{i=1}^{q}a_{i},\ q=\overline{1,n},$$$$where\ h_{q}:=\min\{i|b_{i-q+1}<c_{i}\},\ q=\overline{1,n}.$$ Adopting the convention that $c_i,b_i=+\infty$ for $i\leq 0$, $c_i=-\infty$, for $i>m+n$ $b_i=-\infty$, for $i>m$ and $\sum_{i=a}^b=0$ in case $a>b$ one can see that the indices $h_q$ and the sums above are all well defined. Moreover, we have that $q\leq h_q\leq q+m$ and $h_1<h_2<\dots<h_n$.
The term generalized majorization is motivated by the fact, that for $m=0$ the generalized majorization reduces to the dominance ordering $c\preccurlyeq a$.
\end{definition}

Consider the partitions $a=(a_1,...,a_n)$, $b=(b_1,...,b_m)$ and $c=(c_1,...,c_{m+n})$. Using the definition of the generalized majorization we define inductively $x_1:=\sum_{i=1}^{h_{1}}c_{i}-\sum_{i=1}^{h_{1}-1}b_{i},\ x_q:=\sum_{i=1}^{h_{q}}c_{i}-\sum_{i=1}^{h_{q}-q}b_{i}-x_1-\dots-x_{q-1}$ for $q=\overline{1,n}$. If $x:=(x_1,\dots,x_n)\in\mathbb Z^n$ then observe that $x$ depends only on $b,c$ and not on $a$. The following proposition gives an another connection between the notion of generalized majorization and the dominance ordering.
\begin{proposition} \label{GenMajChar} Suppose that for the partitions $a,b,c$ above we have that $b_i\geq c_{i+n}$ for $i=\overline{1,m}$ and $\sum_{i=1}^{m}b_{i}+\sum_{i=1}^{n}a_{i}=\sum_{i=1}^{m+n}c_{i}$. Then $c\prec (b,a)$ iff $x\preccurlyeq a$.
\end{proposition}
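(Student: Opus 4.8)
The plan is to reduce both sides of the claimed equivalence to the \emph{same} system of partial-sum inequalities, the only discrepancy being whether the top relation is an inequality or an equality; that remaining gap is then closed using the hypothesis $b_i\geq c_{i+n}$ together with the bound $q\leq h_q\leq q+m$ recorded in the definition.

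First I would record the telescoping identity for the partial sums of $x$. Writing $S_q:=\sum_{i=1}^{h_q}c_i-\sum_{i=1}^{h_q-q}b_i$, the recursive definition $x_q=S_q-(x_1+\dots+x_{q-1})$ gives, by induction on $q$, that $\sum_{i=1}^{q}x_i=S_q$ for every $q=\overline{1,n}$. Consequently the defining inequalities of the generalized majorization, namely $S_q\leq\sum_{i=1}^{q}a_i$, are literally the dominance partial-sum inequalities $\sum_{i=1}^{q}x_i\leq\sum_{i=1}^{q}a_i$ for $q=\overline{1,n}$.

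Next I would compare the two conditions under the standing hypotheses, which are assumed throughout, so that $c\prec(b,a)$ is equivalent to precisely the $n$ inequalities above. By definition $x\preccurlyeq a$ means $\sum_{i=1}^{q}x_i\leq\sum_{i=1}^{q}a_i$ for $q=\overline{1,n-1}$ together with the equality $\sum_{i=1}^{n}x_i=\sum_{i=1}^{n}a_i$. Thus the implication $x\preccurlyeq a\Rightarrow c\prec(b,a)$ is immediate, since the equality at $q=n$ yields in particular the inequality at $q=n$. The only content lies in the converse, where I must show that the $q=n$ inequality is in fact forced to be an equality.

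The main step, which I expect to be the crux, is to prove that $\sum_{i=1}^{n}x_i\geq\sum_{i=1}^{n}a_i$ holds automatically under the hypotheses. Using $\sum_{i=1}^{n}x_i=S_n$ and the total-sum identity $\sum_{i=1}^{n}a_i=\sum_{i=1}^{m+n}c_i-\sum_{i=1}^{m}b_i$, a direct cancellation gives
$$\sum_{i=1}^{n}a_i-\sum_{i=1}^{n}x_i=\sum_{i=h_n+1}^{m+n}c_i-\sum_{i=h_n-n+1}^{m}b_i.$$
Since $n\leq h_n\leq m+n$, every index $i$ with $h_n+1\leq i\leq m+n$ satisfies $n+1\leq i\leq m+n$, so the reindexed hypothesis $b_{i-n}\geq c_i$ applies and yields $\sum_{i=h_n+1}^{m+n}c_i\leq\sum_{i=h_n+1}^{m+n}b_{i-n}=\sum_{j=h_n-n+1}^{m}b_j$ (the degenerate case $h_n=m+n$ making both sums empty). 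Hence the right-hand side above is $\leq 0$, i.e. $\sum_{i=1}^{n}x_i\geq\sum_{i=1}^{n}a_i$. Combining this with the $q=n$ majorization inequality $\sum_{i=1}^{n}x_i\leq\sum_{i=1}^{n}a_i$ forces equality, and together with the inequalities for $q<n$ this is exactly $x\preccurlyeq a$, completing the forward direction and hence the proof. The only subtlety to watch is the bookkeeping on index ranges and the degenerate boundary cases; the estimate itself rests entirely on the embedding hypothesis and the bound on $h_n$.
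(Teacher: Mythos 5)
Your proof is correct and follows essentially the same route as the paper's: both reduce $c\prec(b,a)$ to the dominance inequalities $\sum_{i=1}^{q}x_i\leq\sum_{i=1}^{q}a_i$ via the telescoping identity, and both close the gap at $q=n$ by writing $\sum_{i=1}^{n}a_i-\sum_{i=1}^{n}x_i$ as the tail sum $\sum_{i=h_n+1}^{m+n}\bigl(c_i-b_{i-n}\bigr)$, which the hypothesis $b_i\geq c_{i+n}$ forces to be nonpositive. Your write-up merely makes explicit the bookkeeping (the induction for $\sum_{i=1}^{q}x_i=S_q$ and the index-range check using $n\leq h_n\leq m+n$) that the paper leaves implicit.
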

\begin{proof} If $c\prec (b,a)$ then it follows from the definition that $x\leqslant a$. Since $\sum_{i=1}^{m}b_{i}+\sum_{i=1}^{n}a_{i}=\sum_{i=1}^{m+n}c_{i}$, we obtain that $\sum_{i=1}^{n}x_{i}+\sum_{i=1}^{m+n-h_n}(c_{h_n+i}-b_{h_n-n+i})=\sum_{i=1}^{n}a_{i}$. But we have that $c_{h_n+i}-b_{h_n-n+i}\leq 0$ and $\sum_{i=1}^{n}x_{i}\leq \sum_{i=1}^{n}a_{i}$, so $c_{h_n+i}=b_{h_n-n+i}$ for $i=\overline{1,m+n-h_n}$ and $\sum_{i=1}^{n}x_{i}=\sum_{i=1}^{n}a_{i}$, which means that $x\preccurlyeq a$. The converse statement is trivial.
\end{proof}

It follows from the proof above that for $n=1$ the condition $c\prec (b,a)$ is equivalent to $$b_i\geq c_{i+1},\ i=\overline{1,m},$$ $$ \sum_{i=1}^{m}b_{i}+a_1=\sum_{i=1}^{m+1}c_{i},$$$$b_i=c_{i+1},\ i\geq h_1,$$$$ where\ h_{1}:=\min\{i|b_{i}<c_{i}\}.$$ In this case we speak about an elementary generalized majorization and denote it by $c\prec_1 (b,a)$.

A result by Dodig and Sto\u si\'c in \cite{dodig1} shows that one can decompose the generalized majorization into a ``composition" of elementary generalized majorizations. More precisely we have:
\begin{proposition}{\rm(\cite{dodig1})} \label{DodigElementaryStep} We have $c\prec (b,a)$ iff there is a sequence of partitions $d^j=(d^j_1,\dots,d^j_{m+j})$, $j=\overline{1,n}$ with $d^0=b$ and $d^n=c$ such that $d^j\prec_1(d^{j-1},a_j)$ for $j=\overline{1,n}$.
\end{proposition}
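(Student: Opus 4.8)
My plan is to argue by induction on $n$, the number of parts of $a$, writing $\hat a=(a_1,\dots,a_{n-1})$. The base case $n=1$ is precisely the remark preceding the statement, where $c\prec(b,a)$ was identified with $c\prec_1(b,a)$, so the trivial chain $d^0=b$, $d^1=c$ does the job. The whole inductive step then reduces to a single ``peel-off'' of the last part $a_n$, which I would isolate in two complementary lemmas. First, a \emph{composition} statement: if $d$ is a partition of length $m+n-1$ with $d\prec(b,\hat a)$ and $c\prec_1(d,a_n)$, then $c\prec(b,a)$. Second, a \emph{decomposition} statement: if $c\prec(b,a)$, then there exists a partition $d$ of length $m+n-1$ with $d\prec(b,\hat a)$ and $c\prec_1(d,a_n)$. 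Granting these, the ``if'' direction follows by applying the inductive hypothesis to the first $n-1$ elementary steps of the chain (which yields $d^{n-1}\prec(b,\hat a)$) and then one application of composition; the ``only if'' direction follows by one application of decomposition to produce $d^{n-1}$ and then the inductive hypothesis applied to $d^{n-1}\prec(b,\hat a)$.

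For the composition lemma I would check the three defining conditions of $c\prec(b,a)$ directly. The interlacing $b_i\ge c_{i+n}$ is obtained by chaining the interlacing in $d\prec(b,\hat a)$, namely $b_i\ge d_{i+n-1}$, with the elementary interlacing $d_{i+n-1}\ge c_{i+n}$ coming from $c\prec_1(d,a_n)$. The sum condition telescopes, each elementary step contributing exactly its part $a_j$ to the total. The delicate point is the family of threshold inequalities indexed by $h_1<\dots<h_n$. Here I would invoke Proposition \ref{GenMajChar}: once the interlacing and sum conditions hold, $c\prec(b,a)$ is equivalent to $x\preccurlyeq a$ for the vector $x=x(b,c)$ that depends only on $b$ and $c$. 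Controlling how $x$ changes under a single elementary step then turns the claim into a routine transitivity statement for the dominance ordering $\preccurlyeq$.

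The decomposition lemma is where the real work lies, and I expect it to be the main obstacle. Given $c\prec(b,a)$ I must manufacture the intermediate partition $d$. The relation $c\prec_1(d,a_n)$ pins $d$ down considerably: with $h=\min\{i\mid d_i<c_i\}$ one needs $d_i=c_{i+1}$ for $i\ge h$, $d_i\ge c_i$ for $i<h$, and $\sum_i(c_i-d_i)=a_n$, leaving only the choice of the threshold and of how the deficit $a_n$ is distributed among the low parts. I would fix these data using the last threshold $h_n$ of $c\prec(b,a)$ together with the partial sums $x_1+\dots+x_q$ of Proposition \ref{GenMajChar}. It then remains to verify three things: that $d$ is genuinely weakly decreasing, hence a partition; that $c\prec_1(d,a_n)$ holds; and, most importantly, that $d\prec(b,\hat a)$. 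The last verification is the crux: one must show that deleting $a_n$ shifts every threshold $h_q$ of the full majorization to the correct threshold for $d\prec(b,\hat a)$, and that the reduced dominance $x(b,d)\preccurlyeq\hat a$ follows from $x(b,c)\preccurlyeq a$. I expect the monotonicity $h_1<\dots<h_n$ recorded in the definition, together with the $\pm\infty$ padding conventions, to be exactly what makes these index shifts consistent, and this bookkeeping of thresholds under one peel-off to be the heart of the argument.
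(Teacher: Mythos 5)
Your induction scheme (peel off the last part $a_n$, with a \emph{composition} lemma for the ``if'' direction and a \emph{decomposition} lemma for the ``only if'' direction) is a reasonable skeleton, but as written this is a plan rather than a proof: the two lemmas to which you reduce the proposition carry its entire mathematical content, and neither is proved. In the composition lemma the interlacing and sum conditions do chain exactly as you say, but the threshold inequalities are dispatched with ``controlling how $x$ changes under a single elementary step then turns the claim into a routine transitivity statement'' --- that control is precisely the nontrivial part: $x(b,c)\in\mathbb Z^n$ and $x(b,d)\in\mathbb Z^{n-1}$ are computed from different pairs of partitions via different threshold sequences $h_q$, and you establish no relation between them, so there is nothing yet to which transitivity of $\preccurlyeq$ could be applied. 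The decomposition lemma is in worse shape: the intermediate partition $d$ is never actually defined (saying you ``would fix these data using $h_n$ together with the partial sums $x_1+\dots+x_q$'' does not specify $d$), and all three required verifications --- that $d$ is a partition, that $c\prec_1(d,a_n)$, and that $d\prec(b,\hat a)$ --- are explicitly deferred as ``the heart of the argument''. A proof whose heart is left as an expectation has a genuine gap.

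For comparison: the paper itself offers no proof of Proposition \ref{DodigElementaryStep}; it is quoted from Dodig and Sto\u si\'c \cite{dodig1} (see also \cite{dodig2}), where exactly the kind of index bookkeeping you postponed is carried out in full --- it is the substance of that work, not a routine verification. The paper does, however, point to a genuinely different way to close your gap, via the remark following the proposition on $\mathcal{I}_a*\{[I_b]\}$: Sz\"oll\H osi \cite{szoll} proves module-theoretically and independently that $[I_c]\in\mathcal{I}_a*\{[I_b]\}$ iff $c\prec(b,a)$. Granting that, the associativity of the extension monoid product together with Proposition \ref{ProdIincreasing} and the $n=1$ case (which follows from Proposition \ref{ProdIiIj}) identifies membership $[I_c]\in\{[I_{a_n}]\}*\dots*\{[I_{a_1}]\}*\{[I_b]\}$ with the existence of a chain of preinjective middle terms $I_{d^j}$ satisfying $d^j\prec_1(d^{j-1},a_j)$, and Proposition \ref{DodigElementaryStep} follows. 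So either you carry out the combinatorial bookkeeping you deferred, or you replace it by the module-theoretic argument; as it stands, neither has been done.
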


We mention next two existing results on extensions of preinjectives which involve the combinatorial notions above.
\begin{proposition}{\rm(Sz\'ant\'o \cite{szanto})}\label{ProdIincreasing} Suppose $a=(a_1,\dots,a_n)$ is a partition. Then
$$\{[I_{a_n}]\}*\dots*\{[I_{a_1}]\}=\{[I_{\alpha}]| \alpha\preccurlyeq a\}.$$
\end{proposition}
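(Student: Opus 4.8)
\medskip

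The plan is to establish the two inclusions separately: ``$\subseteq$'' by a homological monotonicity argument that is uniform in all the factors, and ``$\supseteq$'' by induction on $n$, fed by the decomposition of the dominance order into elementary generalized majorizations (Proposition \ref{DodigElementaryStep}).

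First some bookkeeping. Since preinjectives are closed under extensions (Lemma \ref{KroneckerFactsLemma} a) together with additivity of the defect), every module occurring in the product is preinjective, and its dimension vector is $\sum_i\underline\dim I_{a_i}=(N,N+n)$ with $N=a_1+\dots+a_n$; as each indecomposable preinjective summand has defect $+1$, such a module has exactly $n$ indecomposable summands, so it is $I_\alpha$ for a partition $\alpha$ of $N$ into $n$ (possibly zero) parts. Using Lemma \ref{KroneckerFactsLemma} d) one computes $\dim_k\Hom(I_t,I_\alpha)=\sum_i(t+1-\alpha_i)_{+}=n(t+1)-\sum_{k=1}^{t+1}\alpha^{*}_k$, where $\alpha^{*}$ is the conjugate partition; hence for two partitions of $N$ into $n$ parts one has $\alpha\preccurlyeq a$ iff $\dim_k\Hom(I_t,I_\alpha)\le\dim_k\Hom(I_t,I_a)$ for all $t\ge 0$. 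For the inclusion ``$\subseteq$'' take $[I_\alpha]$ in the product; then $I_\alpha$ admits a filtration whose successive quotients are the $I_{a_i}$, and applying the left exact functor $\Hom(I_t,-)$ step by step gives $\dim_k\Hom(I_t,I_\alpha)\le\sum_i\dim_k\Hom(I_t,I_{a_i})=\dim_k\Hom(I_t,I_a)$ for every $t$, so $\alpha\preccurlyeq a$.

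For ``$\supseteq$'' I would induct on $n$, the case $n=1$ being trivial. By associativity the product equals $\{[I_{a_n}]\}*\mathcal B$ where $\mathcal B=\{[I_{a_{n-1}}]\}*\dots*\{[I_{a_1}]\}=\{[I_\gamma]\mid\gamma\preccurlyeq a'\}$, $a'=(a_1,\dots,a_{n-1})$, by the inductive hypothesis. Given $c\preccurlyeq a$, Proposition \ref{DodigElementaryStep} applied with empty top partition (so that generalized majorization becomes dominance) yields a partition $\gamma\preccurlyeq a'$ with $c\prec_1(\gamma,a_n)$. Thus everything reduces to the following realization statement: if $c\prec_1(\gamma,m)$ then there is a short exact sequence $0\to I_\gamma\to I_c\to I_m\to 0$. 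Granting this, $[I_c]\in\{[I_{a_n}]\}*\{[I_\gamma]\}\subseteq\{[I_{a_n}]\}*\mathcal B$, completing the induction.

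This realization statement is the step I expect to be the main obstacle. For $\gamma$ a single part it is precisely Proposition \ref{ProdIiIj}, and when $m$ is large relative to $\gamma$ the governing $\Ext$ groups vanish and the only extension is split; but in general $c\prec_1(\gamma,m)$ can force a ``higher rank'' extension that is not a composite of pairwise extensions with the remaining summands split off (for example $(4,4,3)\prec_1((5,5),1)$ is realized only by a rank two class in $\Ext^1(I_1,I_5\oplus I_5)$). I would prove it by induction on the number of parts of $\gamma$, using the pull-back/push-out frame of Green's Theorem \ref{GreenTheorem} (in the spirit of the proofs of Propositions \ref{ProdInRlambda} and \ref{ProdIP}) to split off one indecomposable summand and descend to an elementary majorization with one fewer part; the delicate point, and the place where the combinatorics of $\prec_1$ must be handled with care, is to arrange this splitting so that the interlacing conditions defining $\prec_1$ are preserved at each step.
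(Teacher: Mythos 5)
Your inclusion ``$\subseteq$'' is complete and correct: the formula $\dim_k\Hom(I_t,I_\alpha)=n(t+1)-\sum_{k=1}^{t+1}\alpha^{*}_k$ from Lemma \ref{KroneckerFactsLemma} d), the standard fact that dominance of partitions of the same number is equivalent to this system of Hom-inequalities, and left exactness of $\Hom(I_t,-)$ along a filtration give a clean, field-independent proof that every middle term is dominated by $a$. The reduction of ``$\supseteq$'' to the single claim ``$c\prec_1(\gamma,m)$ implies the existence of $0\to I_\gamma\to I_c\to I_m\to 0$'' via Proposition \ref{DodigElementaryStep} is also valid and non-circular, since the Dodig--Sto\u si\'c decomposition is an external combinatorial result. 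But that realization claim is precisely where all the representation-theoretic content of the proposition is concentrated, and you do not prove it: you yourself call it ``the main obstacle'' and offer only a plan. Your own example $(4,4,3)\prec_1((5,5),1)$ shows why it is not a formality: the required class in $\Ext^1(I_1,I_5\oplus I_5)$ cannot be assembled from Proposition \ref{ProdIiIj} plus split summands, so the proposed induction (``split off one indecomposable summand via a Green frame'') must manufacture exactly such higher-rank extension classes, and you never specify how the modules $A,B,C,D$ of the frame are to be chosen nor why the interlacing conditions defining $\prec_1$ survive the descent. As it stands, only one inclusion is established.

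For comparison: the paper gives no proof of this proposition at all --- it is quoted from \cite{szanto}, where it is obtained by Ringel--Hall (Hall polynomial) computations over finite fields, the passage to arbitrary base fields resting on the field-independence of preinjective extensions proved in \cite{szantoszoll}. Note also that the paper's logical flow is the reverse of yours: the later proposition ``$c\prec(b,a)$ iff $[I_c]\in\mathcal{I}_a*\{[I_b]\}$'', whose $n=1$ case is (up to the trivial identity $\mathcal{I}_{(m)}=\{[I_m]\}$) exactly your realization claim, is deduced in the paper from Proposition \ref{ProdIincreasing}, not used to prove it. Your architecture is a legitimate alternative and would even yield an independent proof of that later proposition, but completing it requires establishing the elementary realization by independent means (finite-field Hall number computations as in \cite{szanto1}, an explicit construction of the extensions, or the combinatorial analysis of \cite{szoll}); nothing in your proposal, nor in the present paper, supplies that step.
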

Remember that $\{[I_{a_1}]\}*\dots*\{[I_{a_n}]\}=\{[I_a]\}$ and denote the reversed product $\{[I_{a_n}]\}*\dots*\{[I_{a_1}]\}=\{[I_\alpha]| \alpha\preccurlyeq a\}$ by $\mathcal{I}_a$.
\begin{proposition}{\rm(Sz\'ant\'o, Sz\"oll\H osi \cite{szantoszoll})} Consider the preinjective modules $I'=(a_nI_n)\oplus...\oplus(a_0I_0)$,
$I=(b_nI_n)\oplus...\oplus(b_0I_0)$ (where $a_i,b_j\in\mathbb N$ and $a^2_n+b^2_n\neq 0$).
Let $a=(a_1,\dots,a_n)$ and $b=(b_1,\dots b_n)$.

Then there is a monomorphism $I'\to I$ iff $a_0\leq b_0$ and $a\ll b$.
\end{proposition}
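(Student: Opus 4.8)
The plan is to characterize the existence of a monomorphism $I'\to I$ between preinjective modules by translating it into a dimension-counting condition on homomorphism spaces, and then to recognize that condition as the weighted dominance ordering. First I would recall the structural fact, available from Proposition \ref{ProdIincreasing} and Corollary \ref{ProdPRI}, that every preinjective module decomposes uniquely as a direct sum of the indecomposables $I_j$, and that the defect is additive on short exact sequences. Since a monomorphism $I'\to I$ has preinjective cokernel (the cokernel of an embedding of preinjectives is again preinjective, as $\Ext^1$ and $\Hom$ between the relevant pieces vanish by Lemma \ref{KroneckerFactsLemma}), the existence of such a monomorphism is equivalent to the existence of an exact sequence $0\to I'\to I\to I''\to 0$ for some preinjective $I''$, i.e. to $[I]\in\{[I'']\}*\{[I']\}$ for a suitable $I''$ with the correct dimension vector.

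The key step is to reduce the question to a numerical criterion. The natural invariant is $\dim_k\Hom(I',I_m)$ for each $m$: a monomorphism $I'\to I$ induces, for every preinjective $I_m$, an inequality between Hom-spaces, and conversely one can try to build the embedding componentwise. Using Lemma \ref{KroneckerFactsLemma} d), for $n\geq m$ one has $\dim_k\Hom(I_n,I_m)=n-m+1$ and $\Hom(I_n,I_m)=0$ otherwise, so $\dim_k\Hom(I',I_m)=\sum_{j\geq m}a_j(j-m+1)$ and similarly for $I$ with the $b_j$. Comparing these partial sums across all $m$ is exactly what the weighted dominance condition $a\ll b$ encodes after the reindexing $(a_1,2a_2,\dots,na_n)\leqslant(b_1,2b_2,\dots,nb_n)$; the separate condition $a_0\leq b_0$ accounts for the $I_0$-components, which contribute to the total dimension but sit outside the dominance comparison on the positive-index coordinates. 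I would verify carefully that summation by parts converts the family of Hom-inequalities into the chain of partial-sum inequalities defining $\ll$, and that equality of total dimension vectors pins down the boundary term.

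The hard part will be the converse direction: constructing an actual monomorphism $I'\to I$ from the combinatorial inequality $a\ll b$ (together with $a_0\le b_0$), rather than merely a numerical comparison of Hom-spaces. The obstacle is that the existence of enough homomorphisms does not automatically furnish an injective one, so I would argue by induction on the number of summands, peeling off one indecomposable at a time and invoking the explicit two-term extension descriptions. Concretely, Proposition \ref{ProdIiIj} describes the possible middle terms of $\{[I_i]\}*\{[I_j]\}$, and iterating it (via associativity, as in Proposition \ref{ProdIincreasing}) lets me realize any $I$ dominating $I'$ in the weighted sense as an extension with $I'$ as a submodule. The weighted dominance ordering is precisely the right bookkeeping device because each elementary extension step $\{[I_i]\}*\{[I_j]\}$ shifts mass between adjacent indices in a way that corresponds exactly to an elementary move in the $\ll$ order; assembling these into a single embedding, and checking that no regular or preprojective summand can intrude (again by Lemma \ref{KroneckerFactsLemma} a) and the defect count), completes the argument.
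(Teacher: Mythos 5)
Your opening reduction is fine: since $\Hom(I,P)=\Hom(I,R)=0$ (Lemma~\ref{KroneckerFactsLemma} a)), every quotient of a preinjective module is preinjective, so a monomorphism $I'\to I$ is the same as an exact sequence $0\to I'\to I\to I''\to 0$ with $I''$ preinjective. But your "key step" is wrong, not just incomplete: a monomorphism $I'\to I$ does \emph{not} induce inequalities $\dim_k\Hom(I',I_m)\le\dim_k\Hom(I,I_m)$, because $\Hom(-,I_m)$ is contravariant and the restriction map $\Hom(I,I_m)\to\Hom(I',I_m)$ is in general neither injective (its kernel is $\Hom(I'',I_m)$) nor surjective (its cokernel maps to $\Ext^1(I'',I_m)$, which need not vanish for preinjectives). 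Concretely, by Proposition~\ref{ProdIiIj} there is an exact sequence $0\to I_2\to I_1\oplus I_1\to I_0\to 0$, hence a monomorphism $I_2\to I_1\oplus I_1$, yet $\dim_k\Hom(I_2,I_2)=1>0=\dim_k\Hom(I_1\oplus I_1,I_2)$; for the summand inclusion $I_1\to I_1\oplus I_0$ the inequality goes the opposite way, so no fixed direction holds. The functor that is monotone under monomorphisms is the covariant one, $\dim_k\Hom(X,-)$. Even after correcting the variance, testing only against preinjective $X=I_m$ cannot yield the theorem: for $I'=I_1\oplus I_1$ and $I=NI_0$ with $N\ge 2$ one has $\dim_k\Hom(I_m,I')=2m\le N(m+1)=\dim_k\Hom(I_m,I)$ for all $m$, but there is no monomorphism (compare the first coordinates of the dimension vectors), and indeed $a\ll b$ fails since $a_1=2>b_1=0$. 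So the weighted dominance condition is not a reindexing of any family of Hom-inequalities against preinjective test modules, and the claimed "summation by parts" equivalence cannot be carried out.

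The converse direction is likewise only asserted. That each product $\{[I_i]\}*\{[I_j]\}$ (Proposition~\ref{ProdIiIj}) realizes an "elementary move" which assembles, under the hypothesis $a\ll b$ and $a_0\le b_0$, into a single embedding $I'\to I$ is exactly the nontrivial combinatorial content of the result; it is what Section 5 of this paper organizes via generalized majorization (Propositions~\ref{GenMajChar}, \ref{DodigElementaryStep}, \ref{ProdIincreasing}) and what the cited source actually proves. Declaring that the bookkeeping "corresponds exactly" to the $\ll$ order is not an argument, particularly since $\ll$ does not even require equal total weights, so one must also produce the correct cokernel $I''$. For calibration: this proposition is quoted in the paper from \cite{szantoszoll} without proof, so there is no internal proof to compare against; measured against what a complete proof requires, your proposal fails in the forward direction (wrong variance of Hom, and insufficiency of preinjective test objects) and is missing the substance of the backward direction.
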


The following result connects the notion of generalized majorization to extensions of preinjectives.
\begin{proposition}Consider the partitions $a=(a_1,...,a_n),b=(b_1,...,b_m),c=(c_1,...,c_{m+n})$. Then $$c\prec (b,a)\text{ iff } [I_c]\in\mathcal{I}_a*\{[I_b]\}$$
\end{proposition}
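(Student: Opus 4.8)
The plan is to establish the two implications of the biconditional $c\prec(b,a)\iff [I_c]\in\mathcal I_a*\{[I_b]\}$ by induction on $n$, using the elementary decomposition of generalized majorization supplied by Proposition~\ref{DodigElementaryStep} together with the description of $\mathcal I_a$ and the structure of preinjective extensions from Proposition~\ref{ProdIincreasing} and Proposition~\ref{ProdIiIj}. Recall that $\mathcal I_a=\{[I_{a_n}]\}*\cdots*\{[I_{a_1}]\}$, so by associativity of $*$ we may write $\mathcal I_a*\{[I_b]\}=\{[I_{a_n}]\}*\cdots*\{[I_{a_1}]\}*\{[I_b]\}$, where $I_b=I_{b_1}\oplus\cdots\oplus I_{b_m}$. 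The base case $n=1$ is the anchor: here the claim reads $c\prec_1(b,a_1)\iff[I_c]\in\{[I_{a_1}]\}*\{[I_b]\}$, i.e. I must match the combinatorial description of the product of a single preinjective indecomposable $I_{a_1}$ by a preinjective $I_b$ against the explicit conditions for elementary generalized majorization $c\prec_1(b,a_1)$ stated just after Proposition~\ref{GenMajChar}.

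First I would set up the inductive step in the forward direction. Suppose $c\prec(b,a)$. By Proposition~\ref{DodigElementaryStep} there is a sequence of partitions $d^0=b, d^1,\dots,d^n=c$ with $d^j\prec_1(d^{j-1},a_j)$ for each $j$. Applying the $n=1$ case repeatedly, I get $[I_{d^j}]\in\{[I_{a_j}]\}*\{[I_{d^{j-1}}]\}$ for every $j$. Composing these memberships using associativity and the fact that $*$ respects unions, I obtain $[I_c]=[I_{d^n}]\in\{[I_{a_n}]\}*\cdots*\{[I_{a_1}]\}*\{[I_b]\}=\mathcal I_a*\{[I_b]\}$. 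The converse direction proceeds by unwinding the iterated product: if $[I_c]\in\{[I_{a_n}]\}*(\{[I_{a_{n-1}}]\}*\cdots*\{[I_{a_1}]\}*\{[I_b]\})$, then there is some intermediate $[I_{d}]$ in the inner product with $[I_c]\in\{[I_{a_n}]\}*\{[I_d]\}$. The $n=1$ case gives $c\prec_1(d,a_n)$, while the inductive hypothesis applied to the inner product gives $d\prec(b,(a_1,\dots,a_{n-1}))$; reassembling these via Proposition~\ref{DodigElementaryStep} yields $c\prec(b,a)$.

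The main obstacle I anticipate is the base case $n=1$, which is where all the genuine content lives. I must show that the isomorphism classes $[I_c]$ appearing in $\{[I_{a_1}]\}*\{[I_b]\}$ are exactly those partitions $c$ of length $m+1$ satisfying the elementary conditions $b_i\ge c_{i+1}$ for all $i$, $\sum b_i+a_1=\sum c_i$, and $b_i=c_{i+1}$ for $i\ge h_1$ where $h_1=\min\{i\mid b_i<c_i\}$. The natural route is to compute $\{[I_{a_1}]\}*\{[I_{b_1}]\}*\cdots*\{[I_{b_m}]\}$ by iterating Proposition~\ref{ProdIiIj}: inserting the single index $a_1$ into the decreasing sequence $b$ produces, by the two-term exchange rule $\{[I_i]\}*\{[I_j]\}$, precisely the interlacing family of partitions characterised by the inequalities $b_i\ge c_{i+1}$ and the ``saturation'' condition that the parts below the threshold $h_1$ are left untouched. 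The book-keeping here—tracking how the defect-preserving exchanges cascade through the sequence and verifying that the resulting constraints coincide termwise with the $h_1$-based description of $\prec_1$—is the delicate part; I would organise it by locating the position where $a_1$ is absorbed and checking the inequalities case by case above and below $h_1$. A secondary technical point to handle carefully is the convention $c_i,b_i=\pm\infty$ for out-of-range indices, which must be respected so that $h_1$ is well defined and the boundary terms in the sums behave correctly throughout the induction.
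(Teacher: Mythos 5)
Your plan is correct and follows essentially the same route as the paper's own proof: the paper also anchors everything in the case $n=1$, which it settles by comparing Proposition~\ref{ProdIiIj} with the definition of elementary generalized majorization, and then deduces the general case from the decomposition into elementary majorizations given by Proposition~\ref{DodigElementaryStep}. The only difference is one of detail: you make explicit the induction, the unwinding of the iterated product $\{[I_{a_n}]\}*\cdots*\{[I_{a_1}]\}*\{[I_b]\}$ (implicitly using that middle terms of preinjective extensions are again preinjective), and the bookkeeping in the base case, all of which the paper compresses into a three-sentence sketch.
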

\begin{proof} The equality $\mathcal{I}_a*\{[I_b]\}=\{[I_\alpha]| \alpha\preccurlyeq a\}*\{[I_b]\}$ is clear using Proposition \ref{ProdIincreasing}. The case $n=1$ easily follows using the definition of the elementary generalized majorization and Proposition \ref{ProdIiIj}. The general case is then a consequence of Proposition \ref{DodigElementaryStep}.
\end{proof}
\begin{remark} The result above is also proved in \cite{szoll} by Sz\"oll\H osi. We obtain in this way an independent proof also for Proposition \ref{DodigElementaryStep} by Dodig and Sto\u si\'c.
\end{remark}
Using all the results above we finally give a new characterization of the embedding of preinjective Kronecker modules.

Consider the partitions $b=(b_1,...,b_m)$, $c=(c_1,...,c_{m+n})$ and as before let $x=(x_1,\dots,x_n)\in\mathbb Z^n$ with $x_1=\sum_{i=1}^{h_{1}}c_{i}-\sum_{i=1}^{h_{1}-1}b_{i},\ x_q=\sum_{i=1}^{h_{q}}c_{i}-\sum_{i=1}^{h_{q}-q}b_{i}-x_1-\dots-x_{q-1}$ for $q=\overline{1,n}$.
\begin{proposition}There is a monomorphism $I_b\to I_c$ iff $b_i\geq c_{i+n}$, for $i=\overline{1,h_n-n}$, $b_i=c_{i+n}$, for $i=\overline{h_n-n+1,m}$ and there is a partition $a$ such that $x\preccurlyeq a$. Moreover if $a$ is minimal (using the dominance ordering), then $I_a$ is a factor of the embedding $I_b\to I_c$, i.e. $0\to I_b\to I_c\to I_a\to 0$ is exact.
\end{proposition}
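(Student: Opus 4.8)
The plan is to reduce the existence of a monomorphism $I_b\to I_c$ to a membership statement in an extension monoid product, and then to run that statement through the combinatorial dictionary already assembled. First I would note that the cokernel of any monomorphism $I_b\to I_c$ is again preinjective: a quotient of the preinjective module $I_c$ admits no regular or preprojective direct summand, since $\Hom(I_c,R)=\Hom(I_c,P)=0$ by Lemma \ref{KroneckerFactsLemma} a). Comparing defects in $0\to I_b\to I_c\to Q\to 0$ forces $\partial Q=n$, so $Q\cong I_a$ for a partition $a=(a_1,\dots,a_n)$. Hence a monomorphism $I_b\to I_c$ exists if and only if $[I_c]\in\{[I_a]\}*\{[I_b]\}$ for some partition $a$. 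Because $\{[I_a]\}\subseteq\mathcal I_a$, and conversely $\mathcal I_a*\{[I_b]\}=\bigcup_{\alpha\preccurlyeq a}\{[I_\alpha]\}*\{[I_b]\}$ unwinds any membership into some $\{[I_\alpha]\}*\{[I_b]\}$ with $\alpha$ a partition, this existence is equivalent to $[I_c]\in\mathcal I_a*\{[I_b]\}$ for some partition $a$, which by the preceding proposition means $c\prec(b,a)$ for some $a$.

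The second step is to check that the three stated conditions repackage exactly the clause ``$c\prec(b,a)$ for some $a$'', via Proposition \ref{GenMajChar}. For one direction, $c\prec(b,a)$ gives $b_i\ge c_{i+n}$ for all $i$ straight from the definition, while the computation in the proof of Proposition \ref{GenMajChar} forces the tail equalities $b_i=c_{i+n}$ for $i=\overline{h_n-n+1,m}$ and simultaneously yields $x\preccurlyeq a$; this is precisely the stated list. For the converse, given a partition $a$ with $x\preccurlyeq a$, the inequality and tail-equality conditions give $b_i\ge c_{i+n}$ for every $i$, and the partial-sum identity $\sum_{i=1}^n x_i=\sum_{i=1}^{h_n}c_i-\sum_{i=1}^{h_n-n}b_i$ together with the tail equalities yields $\sum_i b_i+\sum_i x_i=\sum_i c_i$; since $x\preccurlyeq a$ gives $\sum_i a_i=\sum_i x_i$, the summation hypothesis of Proposition \ref{GenMajChar} holds, and that proposition returns $c\prec(b,a)$. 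This settles the equivalence in the statement.

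For the ``moreover'' assertion the issue is to upgrade the membership $[I_c]\in\mathcal I_a*\{[I_b]\}$, which a priori only yields a preinjective factor $I_\alpha$ with $\alpha\preccurlyeq a$, into the genuine cokernel $I_a$ once $a$ is taken minimal. I would argue: choose $a$ minimal in the dominance order among partitions with $x\preccurlyeq a$ (such minimal elements exist, the relevant set being a finite poset of partitions of $\sum_i x_i$). Unwinding $[I_c]\in\mathcal I_a*\{[I_b]\}$ produces a partition $\alpha\preccurlyeq a$ with $[I_c]\in\{[I_\alpha]\}*\{[I_b]\}\subseteq\mathcal I_\alpha*\{[I_b]\}$; the preceding proposition and Proposition \ref{GenMajChar} then force $x\preccurlyeq\alpha$, so $\alpha$ again lies in the set over which $a$ was chosen minimal. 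Minimality gives $\alpha=a$, whence $[I_c]\in\{[I_a]\}*\{[I_b]\}$ and $0\to I_b\to I_c\to I_a\to 0$ is exact.

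The step I expect to be the real obstacle is precisely this last one: the earlier combinatorial results control the full down-set $\mathcal I_a$, whereas a cokernel needs the single class $\{[I_a]\}$, and bridging that gap is exactly what the minimality device accomplishes. In carrying it out one must be careful that the factor $\alpha$ extracted from $\mathcal I_a$ is genuinely a partition (so that $x\preccurlyeq\alpha$ is meaningful and the minimality comparison applies) and that the poset of partitions dominating $x$ is nonempty, which is guaranteed by the existence condition. The remaining steps are bookkeeping with defects and the already-established translation between generalized majorization and extension monoid products.
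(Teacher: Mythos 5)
Your proof is correct and follows essentially the same route as the paper: reduce the existence of a monomorphism to membership $[I_c]\in\mathcal{I}_a*\{[I_b]\}$, translate via the preceding proposition and Proposition \ref{GenMajChar}, and settle the ``moreover'' part by the same minimality argument forcing $a'=a$. You additionally fill in two details the paper leaves implicit (that the cokernel of a monomorphism between preinjectives is preinjective, and the verification of the summation hypothesis of Proposition \ref{GenMajChar} in the converse direction), which is a welcome but not essentially different elaboration.
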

\begin{proof} If there is a monomorphism $I_b\to I_c$, then we have an exact sequence $0\to I_b\to I_c\to I_a\to 0$ with $a=(a_1,\dots,a_n)$ a partition. But then $[I_c]\in\{[I_{a}]\}*\{[I_b]\}\subseteq\mathcal{I}_a*\{[I_b]\}$, so $c\prec (b,a)$ and the assertion follows using Proposition \ref{GenMajChar}. Conversely, the given conditions guarantee by Proposition \ref{GenMajChar} that $c\prec (b,a)$, so $[I_c]\in\mathcal{I}_a*\{[I_b]\}$, which means that we have an exact sequence $0\to I_b\to I_c\to I_{a'}\to 0$, for an $a'\preccurlyeq a$.

In case $a$ is minimal, then as above $0\to I_b\to I_c\to I_{a'}\to 0$, for an $a'\preccurlyeq a$. Since $x\preccurlyeq a'$ and $a$ is minimal, it follows that $a'=a$.
\end{proof}

{\it Acknowledgements.} This work was supported by the Bolyai Scholarship of the Hungarian Academy of Sciences and Grant PN-II-ID-PCE-2012-4-0100.

\bigskip
\emph{Csaba Sz\'ant\'o}

"Babe\c s-Bolyai" University Cluj-Napoca

Faculty of Mathematics and Computer Science

Str. Mihail Kogalniceanu nr. 1

R0-400084 Cluj-Napoca

Romania

e-mail: szanto.cs@gmail.com

\end{document}